\newcounter{ris}
\newtheorem{theorem}{Theorem}[section]
\newtheorem{lemma}[theorem]{Lemma}
\newtheorem{corollary}[theorem]{Corollary}
\theoremstyle{definition}
\newtheorem{question}[theorem]{Question}
\newtheorem{remark}[theorem]{Remark}
\newtheorem{definition}[theorem]{Definition}
\newtheorem{conjecture}[theorem]{Conjecture}
\newtheorem{example}[theorem]{Example}
\newtheorem{notation}[theorem]{Notation}
\def\p{\EuScript P}
\def\q{\EuScript Q}
\def\r{\EuScript R}
\def\s{\EuScript S}
\def\x{\EuScript X}
\def\y{\EuScript Y}
\begin{document}

\title{Decompositions of functions defined on finite sets in $\mathbb{R}^d$}
\author{
 Khaydar Nurligareev\footnote{LIPN, University Sorbonne Paris Nord},
 Ivan Reshetnikov\footnote{Moscow Institute of Physics and Technology}
}

\maketitle

\begin{abstract}
 A finite subset $M \subset \mathbb{R}^d$ is \emph{basic}, if for any function $f \colon M \to \mathbb{R}$ there exists a collection of functions $f_1, \ldots, f_d \colon \mathbb{R} \to \mathbb{R}$ such that for each element $(x_1, \ldots, x_d)\in M$ we have $f(x_1, \ldots, x_d) = f_1(x_1) + \ldots + f_d(x_d)$. For certain finite sets, we prove a~criterion for a~set to be basic, and we show that it cannot be extended to the general case. In addition, we interpret the above criterion in terms of doubly-weighted graphs and give an estimation for the number of elements in certain basic and non-basic subsets.

Key words: finite basic subsets, doubly-weighted graphs.
\end{abstract}

\section{Introduction}

 The concept of \textit{basic subsets} arises in connection with Hilbert's thirteenth problem on the superposition of continuous functions. The first time it was introduced in an explicit form by Sternfeld in 1989 \cite{Sternfeld1989}. He called a~subset $M \subset \mathbb{R}^d$ \emph{(continuously) basic}, if for any continuous function $f \colon M \to \mathbb{R}$ there exists a collection of continuous functions $f_1, \ldots, f_d \colon \mathbb{R} \to \mathbb{R}$ such that
 \begin{equation}\label{eq: basic set condition}
  f(x_1, \ldots, x_d) = f_1(x_1) + \ldots + f_d(x_d)
 \end{equation}
 for each element $(x_1, \ldots, x_d)\in M$. The origin of this concept goes back to 1958~\cite{Arnold1958} when Arnold raised the question equivalent to the following: what are basic subsets in the case $d=2$? Sternfeld showed that a closed bounded subset $M\subset\mathbb{R}^2$ is basic if and only if $M$ does not contain arbitrary long arrays, where an \emph{array} is a (finite or infinite) sequence of points $(x_i,y_i)$ on the plane such that $x_i=x_{i+1}$, $y_i\ne y_{i+1}$ for odd $i$ and $y_i=y_{i+1}$, $x_i\ne x_{i+1}$ for even $i$.

 In this paper, we focus on finite subsets of $\mathbb{R}^d$, where $d\geqslant2$. In this case, the condition of continuity can be omitted. Without loss of generality, we identify finite subsets of~$\mathbb{R}^d$ with integer points inside $d$-dimensional cube~$[n]^d$, where $[n] = \{1,\ldots,n\}$. We establish the following estimation for the number of elements in basic subsets.

\begin{theorem}\label{th: M is basic => |M| < dn - (d-2)}
  If $M \subset [n]^d$ is a basic subset, then
  $$
    |M| \leqslant dn - (d-1).
  $$
  This boundary cannot be improved.
\end{theorem}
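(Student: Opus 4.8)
The plan is to linearise the whole problem. Fixing $M$, introduce the linear map
$$
\Phi \colon \underbrace{\mathbb{R}^{n}\oplus\cdots\oplus\mathbb{R}^{n}}_{d}\longrightarrow\mathbb{R}^{M},\qquad \Phi(f_1,\ldots,f_d)(x_1,\ldots,x_d)=f_1(x_1)+\cdots+f_d(x_d),
$$
where each $f_i$ is viewed as a function on $[n]$ (only its values on the $i$-th coordinate projection of $M$ enter $\Phi$, so restricting to $[n]$ loses nothing). By the definition of a basic set, $M$ is basic if and only if $\Phi$ is surjective onto $\mathbb{R}^{M}$, a space of dimension $|M|$.

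Granting this reformulation, the upper bound is immediate from rank--nullity. The source of $\Phi$ has dimension $dn$, so if $\Phi$ is surjective then $|M|=\dim\operatorname{im}\Phi=dn-\dim\ker\Phi$. It thus suffices to produce a $(d-1)$-dimensional subspace of $\ker\Phi$, and this is furnished by the \emph{constant shifts}: for any $(c_1,\ldots,c_d)\in\mathbb{R}^{d}$ with $c_1+\cdots+c_d=0$, the tuple of constant functions $(c_1,\ldots,c_d)$ is sent by $\Phi$ to the zero function on $M$. These tuples form a $(d-1)$-dimensional subspace of $\ker\Phi$, whence $\dim\ker\Phi\geqslant d-1$ and $|M|\leqslant dn-(d-1)$.

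For the sharpness claim I would take $M$ to be the union of the $d$ coordinate axes through $(1,\ldots,1)$, that is, the set of points of $[n]^{d}$ having at most one coordinate different from $1$. Since these axes pairwise meet only at $(1,\ldots,1)$, one gets $|M|=d(n-1)+1=dn-(d-1)$, matching the bound; it remains to check that this $M$ is basic, i.e.\ that $\ker\Phi$ reduces to the constant shifts.

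The main step---though still a routine propagation argument---is this kernel computation. Suppose $(f_1,\ldots,f_d)\in\ker\Phi$. Evaluating the relation $\sum_i f_i(x_i)=0$ along the $i$-th axis, where every coordinate except the $i$-th equals $1$, gives $f_i(x)=f_i(1)-\sum_{j}f_j(1)$ for all $x\in[n]$; hence each $f_i$ is constant, and putting $x=1$ forces $\sum_{j}f_j(1)=0$. So every element of $\ker\Phi$ is a constant shift, $\dim\ker\Phi=d-1$, and $\Phi$ is surjective by rank--nullity. I expect the only point requiring care to be the bookkeeping that each projection $\pi_i(M)$ equals all of $[n]$, so that no further freedom is hidden in the unconstrained values of the $f_i$ and the kernel is genuinely no larger than $d-1$.
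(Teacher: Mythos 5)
Your proof is correct, it establishes both the bound and its sharpness, and it uses the same extremal set (the union of the $d$ coordinate axes through $(1,\ldots,1)$) as the paper; the difference is one of viewpoint and of how basicness of that set is verified. For the upper bound, the paper (Lemma~\ref{lemma: estimations}) works on the row side: each row of the matrix $A_M$ of system~\eqref{eq: basic set condition, algebraic form} has block sums $\sum_j Y_{1j} = \cdots = \sum_j Y_{dj}$ (all equal to $1$), so the rows lie in a subspace of dimension $dn-(d-1)$ and more than that many rows must be dependent. You work on the column side: the $(d-1)$-dimensional space of constant shifts $(c_1,\ldots,c_d)$, $\sum_i c_i=0$, sits inside $\ker\Phi$, and rank--nullity gives the bound. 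These are literally dual statements -- your constant-shift space is exactly the orthogonal complement of the paper's subspace $V$, and both assertions say $\operatorname{rank} A_M \leqslant dn-(d-1)$ -- so nothing is gained or lost there, though your phrasing makes the ``gauge freedom'' of the decomposition explicit. The genuine divergence is in the sharpness check: the paper invokes its combinatorial Lemma~\ref{lemma: one point in layer => basic} (every non-empty subset of $M$ meets some layer in exactly one point, so no non-trivial dependence among rows exists), whereas you compute $\ker\Phi$ directly by propagating the relation along each axis, obtain $\dim\ker\Phi = d-1$, and conclude surjectivity by rank--nullity. Your verification is self-contained and correctly handles the one delicate point (each projection $\pi_i(M)$ covers all of $[n]$, so every value $f_i(x)$ is pinned down); the paper's lemma, by contrast, is a reusable sufficient criterion that sidesteps the computation. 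Both routes are complete.
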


 On the other hand, we estimate the number of elements in non-basic subsets that are minimal in the sense of inclusion (to be precise, we call a~non-basic subset $M\subset[n]^d$ \emph{minimal}, if any proper subset $K\subset M$ is basic). In order to obtain non-trivial results, we suppose that each layer of $[n]^d$ contains an element of a~subset, where by \emph{layers} we mean hyperplanes orthogonal to the coordinate axes, so that each layer can be determined by the equation $x_i=j$, where $i\in[d]$ and $j\in[n]$.

\begin{theorem}\label{th: M is non-basic => 2n-1 < |M| < dn - (d-3)}
  If $M \subset [n]^d$ is a minimal non-basic subset such that every layer of~$[n]^d$ has a non-empty intersection with $M$, then
  $$
    2n \leqslant |M| \leqslant dn - (d-2).
  $$
\end{theorem}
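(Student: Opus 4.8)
The plan is to translate basicness into linear algebra by duality, and then read off both inequalities from rank--nullity. First I would fix the linear map $T \colon \mathbb{R}^M \to \mathbb{R}^{L}$, where $L$ is the collection of nonempty layers, sending a weighting $c \colon M \to \mathbb{R}$ to the tuple of its layer-sums, whose $\ell$-th entry is $\sum_{p \in M \cap \ell} c(p)$. Pairing an arbitrary weighting $c$ against the defining identity $f = f_1 + \cdots + f_d$ and summing over $M$ shows that the obstruction to solving for the $f_i$ is exactly a nonzero $c$ that annihilates the image of the representation map, i.e.\ that makes every layer-sum vanish. Hence the representation map is surjective (so $M$ is basic) if and only if $\ker T = 0$; I will call a nonzero element of $\ker T$ a \emph{relation}, so that $M$ is non-basic precisely when it carries a relation.

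Next I would extract two structural consequences of minimality. First, the relation space is exactly one-dimensional: were its dimension at least $2$, then for any $p \in M$ a suitable linear combination of two independent relations would be a nonzero relation vanishing at $p$, hence supported on the proper subset $M \setminus \{p\}$, contradicting that every proper subset is basic. Second, no layer meets $M$ in a single point: if $M \cap \ell = \{p\}$, then the layer-sum condition for $\ell$ forces $c(p) = 0$ for every relation $c$, so $c$ restricts to a nonzero relation on $M \setminus \{p\}$, again contradicting minimality. Since every layer is nonempty by hypothesis, this second point says every layer contains at least two points of $M$.

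For the lower bound I would fix a single coordinate direction; its $n$ layers partition $M$ into $n$ blocks, each nonempty by hypothesis and of size at least $2$ by the second consequence above, so $|M| = \sum_{j=1}^{n} |M \cap \ell_{j}| \geq 2n$. For the upper bound I would apply rank--nullity to $T$: by one-dimensionality, $|M| = \dim \ker T + \mathrm{rank}\, T = 1 + \mathrm{rank}\, T$. The row space of $T$ is spanned by the $dn$ layer-indicator vectors in $\mathbb{R}^M$, and for each of the $d$ directions the sum of its $n$ indicators equals the all-ones vector $\mathbf{1}_M$, since every point lies in exactly one layer per direction. The $d-1$ differences of these direction-sums are therefore independent dependencies among the rows, giving $\mathrm{rank}\, T \leq dn - (d-1)$ and hence $|M| \leq dn - (d-2)$.

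The conceptual crux is the duality reformulation of the first paragraph: once relations are identified with $\ker T$, both inequalities are forced by elementary dimension counts. The step needing the most care is the dependency count for the upper bound, where I must check that the $d$ direction-sums of layer-indicators genuinely coincide with $\mathbf{1}_M$ and that their differences give at least $d-1$ independent dependencies among the $dn$ rows, which is what pins the estimate down to $dn-(d-2)$ rather than something larger.
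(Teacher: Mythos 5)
Your proof is correct. The lower bound coincides with the paper's own argument: minimality rules out layers meeting $M$ in exactly one point, the hypothesis rules out layers missing $M$ entirely, and summing over the $n$ layers of a single coordinate direction gives $|M|\geqslant 2n$. For the upper bound you take a genuinely different (though equivalent in substance) route. The paper deletes a point: by minimality $M\setminus\{\x\}$ is basic, and Theorem~\ref{th: M is basic => |M| < dn - (d-2)} applied to that subset yields $|M|-1\leqslant dn-(d-1)$. You instead invoke the exact one-dimensionality of the space of annihilation functions (your $\ker T$; this is the paper's Lemma~\ref{lemma: M is min. non-basic <=> annihilation weight function is unique}, which you reprove) and apply rank--nullity, $|M|=1+\operatorname{rank}T$, together with the bound $\operatorname{rank}T\leqslant dn-(d-1)$ obtained from the $d-1$ independent dependencies among the layer-indicator rows. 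That rank bound is precisely the dual formulation of the paper's Lemma~\ref{lemma: estimations} (each row of $A_M$ lies in the codimension-$(d-1)$ subspace where the $d$ direction-sums agree), so the underlying dimension count is identical; the real difference is how minimality enters --- uniqueness of the annihilation function for you, basicness of $M\setminus\{\x\}$ for the paper. What your packaging buys is a self-contained, unified treatment in which both inequalities fall out of the kernel/rank analysis of one operator $T$ (and, as a bonus, it only needs the hypothesis through the fact that all $dn$ layers are nonempty); what the paper's buys is brevity, since it can lean on Theorem~\ref{th: M is basic => |M| < dn - (d-2)} and Lemma~\ref{lemma: M is non-basic <=> annihilation weight function}, which it has already established.
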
 

 The condition for a subset $M \subset [n]^d$ to be basic is equivalent to the consistency of the corresponding system of linear equations with integer coefficients. In particular, there exists an algorithm, polynomial in $|M|$, for determining whether the subset $M$ is basic. For $d=2$, however, one can indicate a~simpler algorithm based on the following criterion: a finite subset $M \subset [n]^2$ is basic if and only if $M$ does not contain a \emph{closed array}, that is, a~non-trivial finite array with the coinciding first and last points \cite{Skopenkov2010}. The reader can think about this criterion in the following way. Let us color the points of a closed array in two colors, say, color odd points in red and even points in blue respectively. Then every layer contains the same number of red and blue points. Extending the idea of coloring onto $\mathbb{R}^d$, we get the following theorem. 
 
\begin{theorem} \label{th_set_to_graph}
 Let $M\subset[n]^d$ be a subset containing two or zero elements in every layer. Then $M$ is non-basic if and only if there exists a non-empty subset $K\subset M$ and a coloring of $K$ in two colors such that every layer contains the same number of elements of each color.
\end{theorem}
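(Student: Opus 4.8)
The plan is to translate basicness into linear algebra and then read the combinatorial condition off the layer structure. For a fixed $f$, condition~\eqref{eq: basic set condition} is a system of $|M|$ linear equations in the unknowns $f_i(j)$, with coefficient matrix $A$ whose rows are indexed by the points $p\in M$ and which has a $1$ in column $(i,j)$ exactly when the $i$-th coordinate of $p$ equals $j$. Then $M$ is basic if and only if $A$ is surjective onto $\mathbb{R}^M$, so $M$ is non-basic if and only if $\ker A^{T}\neq 0$; unwinding the definition of $A$, a vector $c=(c_p)_{p\in M}$ lies in $\ker A^{T}$ precisely when $\sum_{p\in L\cap M} c_p = 0$ for every layer $L$. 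This reduces the theorem to the claim that, under the hypothesis that every layer meets $M$ in $0$ or $2$ points, a non-zero real weight vector with vanishing layer sums exists if and only if a non-zero one with entries in $\{-1,0,+1\}$ does --- the latter being exactly a two-coloring of a non-empty $K\subset M$ that balances every layer (say red $=+1$, blue $=-1$). The implication from a coloring to non-basicness is then immediate, since a balanced coloring of a non-empty $K$ gives such a $\{-1,0,+1\}$-valued vector, which is non-zero and kills every layer sum.

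For the converse I would use the $0$-or-$2$ hypothesis to normalize an arbitrary real solution to a $\pm1$ one. Each $p\in M$ lies in the non-empty layer $\{x_i=x_i^{(p)}\}$ for every $i$, which therefore has exactly two points, so every $2$-element layer $\{p,q\}$ contributes the constraint $c_p+c_q=0$. Form the graph $G$ on vertex set $M$ whose edges join the two points of each such layer; along every edge $c_p=-c_q$, so $|c_p|$ is constant on each connected component of $G$, and a component on which $c$ is non-zero has no odd cycle (traversing one would force $c_{p_0}=-c_{p_0}$) and is hence bipartite. Given a non-zero $c$, I would pick a component $C$ on which $c$ does not vanish, let $a$ be its common absolute value, and set $c'=c/a$ on $C$ and $c'=0$ off $C$. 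Then $c'$ takes values in $\{-1,+1\}$ on $C$ and $0$ elsewhere, and every $2$-element layer has its two (adjacent, hence co-component) points either both in $C$ or both outside it, so all layer sums still vanish; taking $K=C$ then yields the desired balanced coloring.

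The linear-algebra dictionary of the first step is routine; the essential content, and the only place the $0$-or-$2$ hypothesis is genuinely used, is the normalization argument, whose crux is that the two-element-layer constraints force $|c_p|$ to be locally constant and thus permit rescaling an arbitrary real kernel vector to a $\{-1,0,+1\}$-valued one. The point I expect to require the most care is verifying that restricting to a single non-vanishing component preserves the vanishing-sum conditions at all other layers, which works precisely because the two points of any $2$-element layer are adjacent in $G$ and so lie in the same component.
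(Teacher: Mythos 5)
Your proposal is correct and follows essentially the same route as the paper: both reduce non-basicness to the existence of a non-trivial real vector with vanishing layer sums (the paper's Lemma~\ref{lemma: M is non-basic <=> annihilation weight function}), note that the coloring-implies-non-basic direction is immediate, and use the two-or-zero hypothesis only to normalize an arbitrary real solution to a $\{-1,0,+1\}$-valued one. The sole difference is the normalization step: the paper takes the pointwise sign $f(\x)/|f(\x)|$ on the support of the annihilation function (valid because the two points of any occupied layer carry opposite values, hence vanish simultaneously), whereas you restrict to one non-vanishing component of the layer-graph and rescale --- marginally longer, though it shows in addition that $K$ may be taken to be a single (bipartite) component, anticipating Corollary~\ref{cor:graph_to_set}.
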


 It turns out that the same ideas can be used for studying doubly-weighted graphs as well. In particular, we establish the following result which is of independent interest.

\begin{theorem} \label{th_basis_graph}
 A graph $G = (V, E)$ does not contain a bipartite connected component if and only if for any vertex weight function $w_V \colon V \to \mathbb{R}$ there exists an edge weight function $w_E \colon E \to \mathbb{R}$ such that the weight of any vertex is equal to the sum of weights of the edges incident to this vertex:
 \[
  w_V(v) = \sum\limits_{e\colon v\in e} w_E(e).
 \]
\end{theorem}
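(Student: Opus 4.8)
The plan is to reformulate the statement in linear-algebraic terms. Consider the incidence matrix $B$ of the graph $G$, with rows indexed by vertices and columns by edges, where the entry for $(v,e)$ is $1$ if $v \in e$ and $0$ otherwise. The condition ``for any $w_V$ there exists $w_E$ with $w_V(v) = \sum_{e \colon v \in e} w_E(e)$'' says precisely that the linear map $B \colon \mathbb{R}^E \to \mathbb{R}^V$ is surjective. So the theorem reduces to showing that $B$ is surjective if and only if $G$ has no bipartite connected component. Since $G$ decomposes into connected components and $B$ is block-diagonal with respect to this decomposition, surjectivity of $B$ holds iff it holds on each component, so it suffices to treat the case where $G$ is connected.

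First I would reduce to connected $G$ by the block-diagonal observation, so the claim becomes: for a connected graph, $B$ is surjective iff $G$ is \emph{not} bipartite. Since surjectivity of $B$ is equivalent to its transpose $B^{\top}$ being injective, I would instead analyze $\ker B^{\top} \subseteq \mathbb{R}^V$. A vector $x \in \mathbb{R}^V$ lies in $\ker B^{\top}$ exactly when $x_u + x_v = 0$ for every edge $\{u,v\} \in E$, i.e.\ $x_v = -x_u$ along every edge. The core combinatorial step is to determine when this forces $x = 0$.

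The key step is to propagate the constraint $x_v = -x_u$ across a connected graph. Fixing a base vertex $v_0$ and using connectivity, the value $x_v$ is determined by $x_{v_0}$ up to the sign $(-1)^{\ell}$ where $\ell$ is the length of a path from $v_0$ to $v$; consistency of these assignments is exactly the question of whether $G$ is bipartite. If $G$ is bipartite with parts $A, B$, the nonzero vector $x$ with $x_v = +1$ on $A$ and $x_v = -1$ on $B$ lies in $\ker B^{\top}$, so $B^{\top}$ is not injective and $B$ is not surjective. Conversely, if $G$ is not bipartite, it contains an odd cycle; traversing this cycle forces $x_{v_0} = -x_{v_0}$, hence $x_{v_0} = 0$, and then connectivity propagates $x_v = 0$ to every vertex, giving $\ker B^{\top} = 0$ and surjectivity of $B$. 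Here I would lean on the standard characterization of bipartiteness by the absence of odd cycles.

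The main obstacle is the careful handling of well-definedness when propagating signs along paths: different paths between the same pair of vertices may have different parities, and this is precisely where bipartiteness enters. I expect the cleanest way to package this is to note that a consistent sign assignment $x_v = (-1)^{d(v_0,v)} x_{v_0}$ exists (and is nonzero) if and only if every cycle has even length, i.e.\ $G$ is bipartite, which dovetails with the odd-cycle argument above. Assembling the per-component analysis then yields the full equivalence: $B$ is surjective iff no connected component is bipartite.
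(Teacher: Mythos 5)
Your proof is correct and takes essentially the same route as the paper's: the paper's Lemma~\ref{claim1} is your reformulation of the condition as surjectivity of the incidence matrix, and its Lemma~\ref{claim2} analyzes linear dependences among the rows (co-boundaries) --- exactly the elements of your $\ker B^{\top}$ --- via the same sign-alternation along edges. The only cosmetic difference is that you invoke the odd-cycle characterization of bipartiteness and reduce to connected components explicitly, where the paper constructs the bipartition directly from the two classes of coefficients within each component.
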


 Assigning colors to elements of a finite subset $M\subset[n]^d$ is a particular case of \emph{weight functions} $M\to\mathbb{Z}$. Identifying red and blue colors with values of the set $\{-1,1\}$, one can notice that if a non-basic subset $M$ satisfies Theorem~\ref{th_set_to_graph}, then, for a subset $K\subset M$, the sum of the weight function values taken over a fixed layer is zero. This observation leads us to the following definition. We will call $f\colon M\to\mathbb{Z}$ \emph{annihilation function of~$M$}, if for each layer $L$:
 \[
  \sum\limits_{\x\in L\cap M}f(\x) = 0.
 \]
 It turns out that basic and minimal non-basic subsets admit the following descriptions in terms of their annihilation functions.

\begin{lemma}\label{lemma: M is non-basic <=> annihilation weight function}
 A subset $M \subset [n]^d$ is non-basic if and only if there exists a~non-trivial annihilation function of $M$.
\end{lemma}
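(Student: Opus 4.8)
The plan is to translate basicity into a statement about a single linear map and then recognize the annihilation condition as the defining equation of its left kernel. Index the coordinate layers meeting $M$ by pairs $(i,j)$ with $i\in[d]$, $j\in[n]$, and take as unknowns the restricted values $f_i(j)$. For a point $\x=(x_1,\dots,x_d)\in M$, equation~(\ref{eq: basic set condition}) reads $\sum_{i=1}^d f_i(x_i)=f(\x)$. Collecting these equations over all $\x\in M$ yields a linear system $A\mathbf v=\mathbf f$, where $A$ is the incidence matrix whose rows are indexed by the points of $M$ and whose columns are indexed by the layers, with $A_{\x,L}=1$ when $\x\in L$ and $A_{\x,L}=0$ otherwise. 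Each row of $A$ contains exactly $d$ ones, one per coordinate direction. By definition $M$ is basic precisely when this system is solvable for every right-hand side $\mathbf f\in\mathbb{R}^M$, that is, when $A$ is surjective onto $\mathbb{R}^M$, equivalently when the rows of $A$ are linearly independent.

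Next I would dualize. The set $M$ is non-basic if and only if the rows of $A$ are linearly dependent, i.e.\ there is a nonzero vector $\mathbf c=(c_\x)_{\x\in M}$ with $\mathbf c^{\mathsf T}A=0$, equivalently $A^{\mathsf T}\mathbf c=0$. Reading off the coordinate of $A^{\mathsf T}\mathbf c$ indexed by a layer $L$ gives
\[
 (A^{\mathsf T}\mathbf c)_L=\sum_{\x\in M}A_{\x,L}\,c_\x=\sum_{\x\in L\cap M}c_\x,
\]
so the equation $A^{\mathsf T}\mathbf c=0$ says exactly that $\sum_{\x\in L\cap M}c_\x=0$ for every layer $L$. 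Hence a nonzero left null vector of $A$ is precisely a non-trivial real-valued annihilation function of $M$, and $M$ is non-basic if and only if such a function exists over $\mathbb{R}$.

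The one point requiring care, and the crux of the argument, is the passage from real to integer coefficients, since an annihilation function is required to take values in $\mathbb{Z}$. This is handled by the observation that $A$ has integer (indeed $0$--$1$) entries, so the left kernel $\{\mathbf c:A^{\mathsf T}\mathbf c=0\}$ is the solution space of a homogeneous system with integer coefficients; its dimension is the same over $\mathbb{Q}$ as over $\mathbb{R}$, and it admits a basis of rational vectors. Thus if this kernel is non-trivial there is a nonzero rational vector in it, and clearing denominators by multiplying through by the least common multiple produces a nonzero integer vector satisfying the same equations. Conversely, a non-trivial integer annihilation function is in particular a nonzero real left null vector. Combining this with the previous paragraph yields the equivalence: $M$ is non-basic if and only if it admits a non-trivial $\mathbb{Z}$-valued annihilation function.
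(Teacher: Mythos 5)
Your proof is correct and follows essentially the same route as the paper: both identify non-basicity with linear dependence of the rows of the $0$--$1$ incidence matrix of points versus layers, recognize the dependence coefficients as an annihilation function, and pass to integer coefficients using the rationality of the kernel. Your write-up merely spells out the surjectivity/left-kernel dualization and the denominator-clearing step that the paper compresses into two sentences.
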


\begin{lemma}\label{lemma: M is min. non-basic <=> annihilation weight function is unique}
 If a non-basic subset $M \subset [n]^d$ is minimal, then the annihilation function is unique up to multiplying by a constant.
\end{lemma}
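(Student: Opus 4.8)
The plan is to exploit a single structural fact about minimal non-basic sets, namely that every non-trivial annihilation function must be nonzero at \emph{every} point of $M$; once this full-support property is in hand, one-dimensionality of the space of annihilation functions is a short piece of linear algebra.

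First I would record two elementary observations. The annihilation functions of any fixed subset are closed under integer linear combinations, since they are precisely the integer solutions of the homogeneous linear system $\sum_{\x\in L\cap M} f(\x)=0$ indexed by the layers $L$. Second, there is a restriction property: if $f$ is an annihilation function of $M$ whose support is contained in a subset $K\subset M$, then $f|_K$ is an annihilation function of $K$, because for every layer $L$ the sum $\sum_{\x\in L\cap K} f(\x)$ differs from $\sum_{\x\in L\cap M} f(\x)=0$ only by the values of $f$ at points of $M\setminus K$, all of which vanish.

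The key step is the claim that \emph{if $M$ is minimal non-basic, then every non-trivial annihilation function $f$ of $M$ satisfies $f(\x)\neq 0$ for all $\x\in M$}. I would argue by contradiction: suppose $f(\x_0)=0$ for some $\x_0\in M$. Then $\operatorname{supp} f\subset K:=M\setminus\{\x_0\}$, so by the restriction property $f|_K$ is an annihilation function of the proper subset $K$, and it is non-trivial because $f$ is non-trivial while vanishing at $\x_0$. By Lemma~\ref{lemma: M is non-basic <=> annihilation weight function}, the existence of such a function forces $K$ to be non-basic, contradicting the minimality of $M$.

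Finally I would deduce uniqueness. Given two non-trivial annihilation functions $f$ and $g$, fix any $\x_0\in M$ and set $h:=g(\x_0)\,f-f(\x_0)\,g$. Then $h$ is again an annihilation function (closure under integer linear combinations) and $h(\x_0)=0$, so by the claim it cannot be non-trivial; hence $h\equiv 0$. Since the claim also guarantees $f(\x_0)\neq 0$, this gives $g=\tfrac{g(\x_0)}{f(\x_0)}\,f$, so $f$ and $g$ are proportional. The main obstacle is the full-support claim, as everything else is routine; its proof is nevertheless short, resting on the fact that a single zero of an annihilation function lets one descend to a proper subset while preserving non-triviality, which is exactly what minimality forbids via Lemma~\ref{lemma: M is non-basic <=> annihilation weight function}.
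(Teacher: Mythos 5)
Your proof is correct and takes essentially the same approach as the paper's: both hinge on forming the combination $g(\x_0)f - f(\x_0)g$ to produce an annihilation function vanishing at a point, whose support is then a proper non-basic subset of $M$ (via Lemma~\ref{lemma: M is non-basic <=> annihilation weight function}), contradicting minimality. Your write-up merely makes explicit the restriction and full-support steps that the paper's terser matrix-language proof leaves implicit.
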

 
 As we have just seen, when a non-basic subset $M$ satisfies conditions of Theorem~\ref{th_set_to_graph}, one can choose an annihilation function of $M$ whose values are in $\{-1,0,1\}$. In general, however, the values are not bounded. More precisely, consider the annihilation function of a minimal non-basic subset~$M$ whose values are setwise coprime integers (we will call such annihilation functions \emph{irreducible}). Then the following result takes place.

\begin{theorem}\label{th_irreducible_annihilation_function_is_unbounded}
 For any positive integer $m$ there exist a positive integer $n$ and a minimal non-basic subset $M\subset[n]^3$ whose irreducible annihilation function~$f$ has the value $f(\x)=m$ for some $\x \in M$.
\end{theorem}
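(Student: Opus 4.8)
The plan is to reduce the statement, via Lemma~\ref{lemma: M is non-basic <=> annihilation weight function} and Lemma~\ref{lemma: M is min. non-basic <=> annihilation weight function is unique}, to a purely combinatorial construction. Those lemmas say that a minimal non-basic set $M$ is exactly a set on which the space of annihilation functions is one-dimensional and whose generator vanishes nowhere; dividing this generator by the greatest common divisor of its values produces the irreducible annihilation function $f$, unique up to sign. Thus it suffices to exhibit, for each $m$, an integer $n$ and a set $M\subset[n]^3$ together with an integer weight $f\colon M\to\mathbb{Z}$ such that (i) every layer-sum of $f$ vanishes, (ii) $f$ is nowhere zero on $M$, (iii) the layer-sum system admits no annihilation function independent of $f$, (iv) $\gcd\{f(\x):\x\in M\}=1$, and (v) $f(\x)=m$ for some $\x$. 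By Theorem~\ref{th_set_to_graph}, values outside $\{-1,0,1\}$ cannot occur as long as every layer meets $M$ in at most two points; hence the construction must use layers carrying three points, which produce relations of the form $f(p)=f(q)+f(r)$ and are the source of the growth.

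First I would design the weight pattern. The idea is a \emph{staircase}: a chain of points along which the weight climbs $1,2,3,\dots,m$, where each unit increment is produced by a three-point layer realising a relation $f(p)+f(q)+f(r)=0$ in which one summand is a unit, while the remaining layers are two-point \emph{transfer} layers that copy a weight to a neighbour up to sign. Including at least one weight equal to $\pm1$ makes the values setwise coprime, so (iv) and (v) are immediate once the climb reaches $m$. I would write explicit coordinates for the points as functions of $m$, with $n$ growing (a linear bound in $m$ is enough), and then verify (i) by a direct check that each of the finitely many layers sums to zero, and (ii) by inspection of the closed-form weights.

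The heart of the argument is condition (iii): that the configuration has corank one, i.e. carries no second, independent annihilation function. For this I would use the cascade principle: if a layer meets the configuration in a single point, then any annihilation function must vanish at that point, so the point may be deleted, possibly exposing new solitary layers. Concretely, I would show that deleting any one point $\x_0$ from $M$ makes some layer solitary and triggers a cascade forcing every remaining weight to zero; equivalently, $M\setminus\{\x_0\}$ admits only the trivial annihilation function and is therefore basic. Given a nowhere-zero annihilation function $f$, this at once yields that the annihilation space of $M$ is one-dimensional --- any second annihilation function can be adjusted by a multiple of $f$ to vanish at $\x_0$, hence is trivial --- and that $M$ is minimal, since every proper subset lies in some $M\setminus\{\x_0\}$ and is basic. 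This is precisely what Lemmas~\ref{lemma: M is non-basic <=> annihilation weight function} and \ref{lemma: M is min. non-basic <=> annihilation weight function is unique} require.

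The main obstacle I anticipate is reconciling growth with minimality. Three-point layers are necessary to push weights past $1$, but each additional growth layer tends to introduce an extra independent relation, raising the corank above one and destroying minimality; the naive doubling or incrementing gadgets already fail for this reason, since duplicating a value and recombining it creates a second cycle. The construction must therefore thread the entire climb through a single global cycle, so that precisely one relation survives, and the delicate point is to arrange the coordinates so that the deletion cascade above genuinely collapses the whole set, with no spurious second cycle remaining. I expect the cleanest route is an inductive surgery: starting from a minimal non-basic set realising value $k$, adjoin a small gadget near the peak point that forces value $k+1$, and prove that the surgery preserves corank one by re-running the cascade locally. Verifying that this surgery neither creates a second annihilation function nor breaks coprimality is where the real work lies.
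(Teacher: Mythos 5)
Your reduction is sound: conditions (i)--(v) are exactly what must be exhibited, your observation that some layer must contain at least three points of $M$ is correct (otherwise Theorem~\ref{th_set_to_graph} caps the irreducible annihilation function at $\pm1$), and your deletion-cascade criterion for simultaneously getting corank one and minimality is a valid and clean way to organize the verification. The gap is that the proof stops exactly where the theorem begins: no set $M$ is ever constructed. You give no coordinates, no weight pattern beyond the word ``staircase,'' and you yourself flag that the decisive step --- showing that the incremental gadget surgery does not create a second independent annihilation function --- is unresolved (``where the real work lies''). Since the entire content of the theorem is the existence of such a configuration, what you have is a correct problem analysis plus an honest statement of the obstruction, not a proof. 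Worse, the obstruction you identify is real for the route you chose: forcing the values $1,2,\dots,m$ to appear one after another requires a new three-point relation per increment, and each such relation must itself be fed by auxiliary unit-weight points whose layers close up, which is precisely the mechanism that generates extra cycles and pushes the corank above one.

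The paper sidesteps this entirely by \emph{not} climbing through intermediate values: its construction realizes only the values $\pm1$ and $\pm m$, jumping to $m$ in a single step by using layers with $m+1$ points rather than three. Concretely, take $3m+2$ distinct coordinates $a_1,\dots,a_m,b_1,\dots,b_m,c_1,\dots,c_m,d,e$ and let $M$ consist of the $3m$ points $(d,a_k,a_k)$, $(b_k,d,b_k)$, $(c_k,c_k,d)$ with weight $1$, the $3m$ points $(e,a_k,a_{k+1})$, $(b_k,e,b_{k+1})$, $(c_k,c_{k+1},e)$ (indices cyclic) with weight $-1$, and the two hub points $(d,d,d)$, $(e,e,e)$ with weights $-m$ and $m$. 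Every two-point layer (e.g.\ $x_2=a_k$ or $x_3=a_k$) balances a $+1$ against a $-1$, and every hub layer (e.g.\ $x_1=d$) contains the $m$ unit points of one cycle plus the hub, summing to $m-m=0$; the gcd of the values is $1$, so this function is irreducible and attains $m$. Minimality then follows by exactly the cascade you propose, run on this configuration: in a minimal non-basic subset $N\subseteq M$ no layer can meet $N$ in a single point, so the two-point layers force each of the three $2m$-cycles to lie in $N$ all-or-nothing; a hub layer whose cycle is present but whose hub is absent would force the cycle's common weight to vanish, so both hubs are in $N$; and a hub with a missing cycle would sit alone in a layer. Hence $N=M$. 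The lesson relative to your plan: the intermediate values $2,\dots,m-1$, which are the source of your corank problem, are unnecessary --- one global relation with $m$ unit points on one side buys the large coefficient at the cost of a single cycle per coordinate direction, which is exactly what corank one can afford.
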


 Since the coefficients of irreducible annihilation functions are unbounded, one cannot expect to have a criterion similar to Theorem~\ref{th_set_to_graph} in the general case. Nevertheless, one can obtain a simplification by considering an annihilation function $f$ of a subset $M\subset[n]^d$ as a function $f\colon [n]^d \to \mathbb{Z}$ such that $f(\x)=0$ as long as $\x\notin M$. The simplest non-trivial annihilation functions of $[n]^d$ correspond to the simplest closed arrays, i.e. rectangles (we call such annihilation functions \emph{simple}, see Definition~\ref{def: simple annihilation function}). It turns out that any annihilation function can be generated as a sum of simple annihilation functions. This observation gives us a rather simple way to construct non-trivial non-basic subsets of $[n]^d$ as domains of annihilation functions of $[n]^d$. 

\begin{theorem} \label{thBoyarov}
 Every annihilation function of $[n]^d$ can be decomposed into a~finite sum of simple annihilation functions.
\end{theorem}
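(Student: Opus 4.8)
The plan is to prove the statement by induction on the dimension $d$, peeling off one coordinate at a time and using the simple annihilation functions to transport all of the mass of $f$ into a single slice. Throughout I will use that the annihilation functions of $[n]^d$ form a subgroup $A_d$ of the free abelian group $\mathbb{Z}^{[n]^d}$ (it is the kernel of the linear map sending a function to the tuple of its layer-sums), that every simple annihilation function lies in $A_d$, and that simple functions are closed under negation; so it suffices to show that the simple annihilation functions generate $A_d$ over $\mathbb{Z}$. The base case $d=1$ is trivial: there every layer is a single point, so the only annihilation function is $0$, realized by the empty sum.

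For the inductive step, write $[n]^d=[n]^{d-1}\times[n]$ and decompose $f$ into slices $f^{(j)}\colon[n]^{d-1}\to\mathbb{Z}$, where $f^{(j)}(\y)=f(\y,j)$. The layer $x_d=j$ is exactly the $j$-th slice, so the annihilation condition gives $\sum_{\y}f^{(j)}(\y)=0$ for every $j$. First I would clear the slices $j=1,\dots,n-1$ one by one, pushing their mass into the slice $x_d=n$. The key observation is that a simple annihilation function living in an $(i,d)$-plane ($i<d$), with its two $x_d$-values equal to $j$ and $n$ and its remaining coordinates fixed, restricts on the slice $x_d=j$ to a difference $e_{\y}-e_{\y'}$ of two points of $[n]^{d-1}$ differing in the single coordinate $i$, while its only other effect is the opposite difference on the slice $x_d=n$. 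Since the grid $[n]^{d-1}$ is connected under single-coordinate moves, such differences generate the whole zero-sum sublattice $\{g\in\mathbb{Z}^{[n]^{d-1}}:\sum g=0\}$ (a standard fact for connected graphs, in the same spirit as Theorem~\ref{th_basis_graph}). As $-f^{(j)}$ lies in this sublattice, I can subtract a suitable integer combination of these simple functions to annihilate $f^{(j)}$. Because each such simple function touches only slices $j$ and $n$, clearing one slice leaves the previously cleared slices untouched, so after processing $j=1,\dots,n-1$ the resulting function $f'$ (still in $A_d$, being $f$ minus simple functions) is supported entirely on the slice $x_d=n$.

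It remains to understand $f'$. Let $g=(f')^{(n)}\in\mathbb{Z}^{[n]^{d-1}}$. Since $f'\in A_d$ is supported on $x_d=n$, every layer $x_i=a$ with $i<d$ receives contributions only from this slice, so $\sum_{\y:\,y_i=a}g(\y)=0$; that is, $g$ is precisely an annihilation function of $[n]^{d-1}$. By the induction hypothesis $g$ decomposes as a finite sum of simple annihilation functions of $[n]^{d-1}$, and each of these lifts to a simple annihilation function of $[n]^d$ by additionally fixing $x_d=n$. Adding back the simple functions subtracted in the clearing step then yields the desired decomposition of $f$.

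I expect the main obstacle to be the clearing step, and specifically the verification that the slice-restrictions of the $(i,d)$-plane simple functions generate the entire zero-sum lattice of a slice. This is where both the connectivity of the grid and the vanishing of the slice-sum $\sum_{\y}f^{(j)}(\y)$ are essential; once that is in place, the bookkeeping that clearing a slice preserves earlier cancellations and that the residual function is an annihilation function in dimension $d-1$ is routine.
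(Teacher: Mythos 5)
Your proof is correct, but there is nothing in the paper to compare it against line by line: the paper does not actually prove Theorem~\ref{thBoyarov}. It only remarks that the statement ``can be verified by double induction on $d$ and $n$'' and that the proof ``is routine and will be omitted.'' Your argument therefore supplies the missing proof, and it is organized differently from what the paper gestures at: you induct on $d$ alone, and the role that an inner induction on $n$ would play is taken over by the slice-clearing step, which rests on the standard lattice fact that for a connected graph $G=(V,E)$ the edge differences $\mathbf{1}_u-\mathbf{1}_v$ generate $\{g\in\mathbb{Z}^V\colon \sum_v g(v)=0\}$ over $\mathbb{Z}$ (push values toward the root of a spanning tree). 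I checked the details and they hold up: the restriction to the slice $x_d=j$ of an $(i,d)$-plane rectangle with $x_d$-coordinates $j$ and $n$ is exactly such an edge difference of the grid graph on $[n]^{d-1}$; each slice sum $\sum_{\y}f^{(j)}(\y)$ vanishes because the slice is itself a layer, so $f^{(j)}$ lies in the zero-sum lattice; clearing slice $j$ perturbs only slices $j$ and $n$, so earlier clearings survive; the residual function supported on $x_d=n$ is an annihilation function of $[n]^{d-1}$ because every layer $x_i=a$ with $i<d$ meets its support only inside that slice; and simple functions of $[n]^{d-1}$ lift to simple functions of $[n]^d$ by fixing $x_d=n$. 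The degenerate cases ($d=1$, $n=1$, and the empty range of $j$) are handled. What your route buys, compared to the double induction the paper alludes to, is that the whole content of the theorem is isolated in one transparent graph-theoretic lemma, which also ties the proof back to the circle of ideas around Theorem~\ref{th_basis_graph}.
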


 The structure of the paper is the following. In Section~\ref{Section:algebra_app}, we translate the concept of finite basic subsets into the language of systems of linear equations and study their properties. Section~\ref{Section:estimations}, Section~\ref{Section:graphs_app} and Section~\ref{Section:structure_of_non-basic_sets} are devoted to the proofs of Theorems~\ref{th: M is basic => |M| < dn - (d-2)} and~\ref{th: M is non-basic => 2n-1 < |M| < dn - (d-3)}, Theorems~\ref{th_set_to_graph} and~\ref{th_basis_graph}, Theorems~\ref{th_irreducible_annihilation_function_is_unbounded} and~\ref{thBoyarov} respectively. We end the paper by Section~\ref{Section:conclusion} discussing possible directions for further research.

\section{Finite basic subsets and systems of linear equations} \label{Section:algebra_app}




  Let $M$ be a subset of $[n]^d$. Then, in algebraic terms, condition~\eqref{eq: basic set condition} for~$M$ to be basic corresponds to the consistency of the system of $|M|$~linear equations in $dn$~variables. Indeed, for every $i \in [d]$, the variable $x_i$ generates $n$~different layers $x_i = 1$, $\ldots$, $x_i = n$. For every $j \in [n]$, j-th layer corresponds to the unique value~$f_i(j)$ that can be interpreted as a~variable~$X_{ij}$. Thus, \eqref{eq: basic set condition} is equivalent to the system of linear equations of the form
  \begin{equation}\label{eq: basic set condition, algebraic form}
    f(\x) = f(x_1,\ldots,x_d) = \sum\limits_{i=1}^d X_{ix_i},
  \end{equation}
  where $\x = (x_1,\ldots,x_d) \in M$. In particular, if this system is consistent for any function $f\colon M\to\mathbb{R}$, then $M$ is basic, and vice versa.

\begin{notation}\label{notation: A_M}
  We denote $A_M$ the matrix of system~\eqref{eq: basic set condition, algebraic form}.
\end{notation}

\begin{remark}
  The set of all functions $f\colon M \to \mathbb{R}$ form a vector space of dimension~$|M|$ with the basis of \emph{indicator functions} $\textbf{1}_{\x}$,
  \[
   \textbf{1}_{\x}(\y) = \left\{\begin{array}{cl}
    1, & \y = \x \\
    0, & \y \ne \x,
   \end{array}\right.
  \]
  where $\x,\y \in M$. Indeed, every function $f$ can be represented as a linear combination of the indicator functions:
  \[
    f = \sum\limits_{\x \in M}f(\x)\textbf{1}_{\x}.
  \]
\end{remark}


\begin{proof}[Proof of Lemma~\ref{lemma: M is non-basic <=> annihilation weight function}]
  A subset $M \subset [n]^d$ is non-basic if and only if the rows of the matrix~$A_M$ of system~\eqref{eq: basic set condition, algebraic form} are linearly dependent. In other words, there exists a non-trivial linear combination of rows which equals zero. Since the rows and columns of~$A_M$ correspond to the elements of $M$ and the layers respectively, coefficients of this linear combination are the values of annihilation function. Note, that the entries of~$A_M$ are zeroes and ones, therefore, it is possible to choose a~linear combination with integer coefficients. 
\end{proof}


\begin{proof}[Proof of Lemma~\ref{lemma: M is min. non-basic <=> annihilation weight function is unique}]
  The existence of two different annihilation functions corresponds to the existence of two different linear combinations of rows of~$A_M$ that equal zero. The latter implies that we can construct a non-trivial linear combination of rows such that at least one of its coefficients is zero. Hence, there is a proper subset $K\subset M$ which is non-basic.
\end{proof}

\begin{remark}
  The converse of Lemma~\ref{lemma: M is min. non-basic <=> annihilation weight function is unique} does not hold. For example, let
$$
 M=\{(1,1,1),(2,1,1),(1,2,1),(1,1,2),(2,2,1)\}.
$$
  Then $M$ is not minimal since we can eliminate $(1,1,2)$ and obtain a~closed plane array (see Fig.~\ref{figure: non-minimal non-basic set}). Nevertheless, any annihilation function of $M$ is a~product of
$$
 f = \textbf{1}_{(1,1,1)} - \textbf{1}_{(2,1,1)} + \textbf{1}_{(2,2,1)} - \textbf{1}_{(1,2,1)}
$$
and some constant.

 We can see that in this example $f(1,1,2) = 0$. In fact, the uniqueness of the annihilation function implies minimality of a non-basic subset $M$, if we additionally require that $f(\x)\ne0$ for any $\x\in M$.
\end{remark}

\begin{center}
\begin{tikzpicture}[scale=1.2,x={(1cm,0cm)},y={(0.54cm,0.36cm)},z={(0cm,1cm)},line width=.5pt]
 \begin{scope}
  \coordinate (a1) at (0,0,0);
  \coordinate (a2) at (1,0,0);
  \coordinate (a3) at (1,1,0);
  \coordinate (a4) at (0,1,0);
  \coordinate (b1) at (0,0,1);
  \coordinate (b2) at (1,0,1);
  \coordinate (b3) at (1,1,1);
  \coordinate (b4) at (0,1,1);
  \draw [line width=.8pt] (b1)--(b4)--(b3)--(b2)--(b1)--(a1)--(a2)--(b2);
  \draw [line width=.8pt] (a2)--(a3)--(b3);
  \draw [dashed] (a1)--(a4);
  \draw [dashed] (b4)--(a4)--(a3);
  \foreach \p in {b2,b3,b4}{
    \filldraw[white] (\p) circle (1.7pt);
    \draw (\p) circle (1.7pt);
  }
  \foreach \p in {a1,a2,a3,a4,b1}
    \filldraw (\p) circle (1.7pt);
  \refstepcounter{ris}
  \draw (0.5,0.5,-0.6) node {Figure \arabic{ris}.\label{figure: non-minimal non-basic set}};
 \end{scope}
\end{tikzpicture}
\end{center}

\begin{lemma}\label{lemma: one point in layer => basic}
 A subset $M \subset [n]^d$ is basic, if for any non-empty subset $K \subset M$ there is a layer containing only one element from $K$.
\end{lemma}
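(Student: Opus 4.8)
The plan is to use the characterization from Lemma~\ref{lemma: M is non-basic <=> annihilation weight function}: a subset $M$ is non-basic precisely when it admits a non-trivial annihilation function. So I would prove the contrapositive in this form: if $M$ admits a non-trivial annihilation function $f\colon M\to\mathbb{Z}$, then there is a non-empty subset $K\subset M$ such that every layer meets $K$ in at least two points (equivalently, no layer contains exactly one point of $K$). This is the natural negation of the hypothesis, and proving it shows that whenever the hypothesis holds, no non-trivial annihilation function can exist, hence $M$ is basic.

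First I would take the support $K=\{\x\in M: f(\x)\ne 0\}$ of a non-trivial annihilation function $f$. Since $f$ is non-trivial, $K$ is non-empty, so by the hypothesis there would be a layer $L$ meeting $K$ in exactly one point, say $\x_0$, with $f(\x_0)\ne 0$. The annihilation condition for this layer reads $\sum_{\x\in L\cap M}f(\x)=0$; but every $\x\in L\cap M$ other than $\x_0$ lies outside $K$ and hence has $f(\x)=0$. Therefore the sum collapses to $f(\x_0)=0$, contradicting $\x_0\in K$. This contradiction shows that no non-trivial annihilation function exists, so by Lemma~\ref{lemma: M is non-basic <=> annihilation weight function}, $M$ is basic.

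The argument is essentially a one-line consequence of the annihilation-function criterion, so I do not anticipate a genuine obstacle. The only point to handle carefully is the logical direction: the hypothesis is a universally quantified statement over all non-empty subsets $K$, and the support of a candidate annihilation function is exactly such a $K$, so the hypothesis applies to it directly and produces the isolating layer. I would make sure to state that the support is non-empty (so that the hypothesis is applicable) and that passing to integer-valued $f$ is justified by Lemma~\ref{lemma: M is non-basic <=> annihilation weight function}, which already guarantees an integer annihilation function whenever a non-trivial one exists.
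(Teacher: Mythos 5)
Your proof is correct and takes essentially the same route as the paper's. The paper argues directly with the matrix $A_M$: the support of any vanishing linear combination of rows would give a submatrix having a column (layer) with a unique non-zero entry, which is impossible; this is precisely your argument, since an annihilation function is exactly the coefficient vector of such a linear combination (as in the proof of Lemma~\ref{lemma: M is non-basic <=> annihilation weight function}), so routing through that lemma changes only the language, not the substance.
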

\begin{proof}
  In terms of matrix $A_M$, the conditions of Lemma~\ref{lemma: one point in layer => basic} mean that any collection of rows of $A_M$ generates a submatrix having a column with the unique non-zero entry. Hence, there is no non-trivial linear combination of rows of $A_M$ which equals zero. This implies that $M$ is basic.
\end{proof}

\section{Estimations for basic and non-basic subsets} \label{Section:estimations}

  The main goal of this section is to prove Theorems~\ref{th: M is basic => |M| < dn - (d-2)} and~\ref{th: M is non-basic => 2n-1 < |M| < dn - (d-3)}.

\begin{lemma}\label{lemma: estimations}
  Let $n\in\mathbb{N}$ and $Y_{ij}$ be coordinates in $\mathbb{R}^{dn}$, where $i\in[d]$ and $j\in[n]$. If the entries of each of $m$ vectors from $\mathbb{R}^{dn}$ satisfy $(d-1)$ equations
  \begin{equation}\label{eq: subspace condition}
    Y_{11} + \ldots + Y_{1n} = \ldots = Y_{d1} + \ldots + Y_{dn}
  \end{equation}
  and $m > dn-(d-1)$, then these $m$ vectors are linearly dependent.
\end{lemma}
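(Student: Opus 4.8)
The plan is to read the $(d-1)$ equations in~\eqref{eq: subspace condition} as the defining relations of a linear subspace $V \subseteq \mathbb{R}^{dn}$, to compute $\dim V$, and then to apply the elementary fact that any family of more than $\dim V$ vectors contained in $V$ must be linearly dependent. In other words, the whole lemma is a dimension count dressed up as a statement about $m$ vectors.

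First I would introduce, for each $i \in [d]$, the linear functional $\varphi_i \colon \mathbb{R}^{dn} \to \mathbb{R}$ given by $\varphi_i(Y) = Y_{i1} + \ldots + Y_{in}$, so that~\eqref{eq: subspace condition} becomes the chain $\varphi_1(Y) = \varphi_2(Y) = \ldots = \varphi_d(Y)$. This chain is equivalent to the $d-1$ homogeneous conditions $\psi_k(Y) := \varphi_k(Y) - \varphi_{k+1}(Y) = 0$ for $k = 1, \ldots, d-1$, and the set of all vectors satisfying them is exactly $V = \bigcap_{k=1}^{d-1} \ker \psi_k$. By hypothesis, each of the $m$ given vectors lies in $V$.

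The only step that asks for an argument is that the $d-1$ functionals $\psi_1, \ldots, \psi_{d-1}$ are linearly independent, since then the rank--nullity theorem yields $\dim V = dn - (d-1)$. This independence follows from the fact that $\varphi_1, \ldots, \varphi_d$ are themselves linearly independent, because they depend on pairwise disjoint blocks of the coordinates $Y_{ij}$. Concretely, from $\sum_{k=1}^{d-1} c_k \psi_k = 0$ one regroups to obtain $c_1\varphi_1 + \sum_{k=2}^{d-1}(c_k - c_{k-1})\varphi_k - c_{d-1}\varphi_d = 0$; comparing coefficients against the independent $\varphi_i$ forces $c_1 = 0$ and then successively $c_2 = \ldots = c_{d-1} = 0$.

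Finally, since all $m$ vectors lie in the subspace $V$ of dimension $dn - (d-1)$ and $m > dn - (d-1) = \dim V$, they outnumber the largest possible linearly independent subset of $V$ and hence are linearly dependent. I do not expect a genuine obstacle here: once the defining functionals are seen to be independent, the conclusion is immediate, so the main (and essentially the only) point requiring care is the verification that the $d-1$ equations cut out a subspace of codimension exactly $d-1$.
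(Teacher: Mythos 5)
Your proof is correct and follows essentially the same route as the paper's: both arguments identify the solution set of~\eqref{eq: subspace condition} as a subspace $V \subset \mathbb{R}^{dn}$ of dimension $dn-(d-1)$ and conclude that any $m > \dim V$ vectors in $V$ are linearly dependent. The only difference is that you explicitly verify the independence of the $d-1$ defining functionals (via the disjoint coordinate blocks), a point the paper's one-line proof takes for granted.
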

\begin{proof}
  Each vector is contained in the subspace $V \subset\mathbb{R}^{dn}$ determined by condition~\eqref{eq: subspace condition}. Hence, $\dim V = dn - (d-1) < m$, which implies that any $m$~vectors in $V$ are linearly dependent.
\end{proof}

\begin{proof}[Proof of Theorem~\ref{th: M is basic => |M| < dn - (d-2)}]
  Let us suppose that $|M| > dn - (d-1)$. Then the rows of the matrix~$A_M$ satisfy conditions of Lemma~\ref{lemma: estimations} for $m=|M|$. Hence, they are linearly dependent, meaning, that $M$ is not basic.

  To show that it is impossible to improve the inequality, it is sufficient to present an appropriate example. For this purpose, set $M$ to be as follows:
  $$
   M = \{(k,1,\ldots,1), (1,k,\ldots,1), \ldots, (1,1,\ldots,k) \mid k\in[n]\}
  $$
  (Figure~\ref{figure: maximal basic set in 4x4x4} illustrates the set $M$ in the case $d=3$, $n=4$). Then $M$ is basic by Lemma~\ref{lemma: one point in layer => basic} and $|M| = dn - (d-1)$.
\end{proof}

\begin{center}
\begin{tikzpicture}[scale=1.2,x={(1cm,0cm)},y={(0.3cm,0.2cm)},z={(0cm,1cm)},line width=.5pt]
 \begin{scope}
  \coordinate (a) at (0,0,0);
  \coordinate (b) at (1,0,0);
  \coordinate (c) at (2,0,0);
  \coordinate (d) at (3,0,0);
  \coordinate (e) at (0,1,0);
  \coordinate (f) at (0,2,0);
  \coordinate (g) at (0,3,0);
  \coordinate (h) at (0,0,1);
  \coordinate (i) at (0,0,2);
  \coordinate (j) at (0,0,3);
  \foreach \p in {0,1,2,3}{
    \draw [line width=.8pt] (0,\p,3)--++(3,0,0)--++(0,0,-3);
    \draw [line width=.8pt] (0,0,\p)--++(3,0,0)--++(0,3,0);
  }
  \foreach \p in {0,1,2}{
    \draw [line width=.8pt] (\p,0,0)--++(0,0,3)--++(0,3,0);
    \draw [dashed] (0,\p+1,3)--++(0,0,-3)--++(3,0,0);
    \draw [dashed] (\p,0,0)--++(0,3,0)--++(0,0,3);
  }
  \foreach \p in {1,2}{
   \draw [dashed] (0,0,\p)--++(0,3,0)--++(3,0,0);
   \foreach \q in {1,2}{
    \draw [dashed] (0,\p,\q)--++(3,0,0);
    \draw [dashed] (\p,0,\q)--++(0,3,0);
    \draw [dashed] (\p,\q,0)--++(0,0,3);
   }
  }
  \foreach \x in {0,1,2,3}{
   \foreach \y in {0,1,2,3}{
    \foreach \z in {0,1,2,3}{
     \filldraw[white] (\x,\y,\z) circle (1.7pt);
     \draw (\x,\y,\z) circle (1.7pt);
    }
   }
  }
  \foreach \p in {a,b,c,d,e,f,g,h,i,j}
    \filldraw (\p) circle (1.7pt);
  \refstepcounter{ris}
  \draw (1.5,1.5,-0.7) node {Figure \arabic{ris}.\label{figure: maximal basic set in 4x4x4}};
 \end{scope}
\end{tikzpicture}
\end{center}

\begin{remark}
  Slightly modifying Lemma~\ref{lemma: estimations}, we can prove that if a basic subset~$M$ belongs to a~parallelepiped of size $n_1 \times \ldots \times n_d$, then
  $$
    |M| \leqslant (n_1 + \ldots + n_d) - (d-1).
  $$
\end{remark}

\begin{proof}[Proof of Theorem~\ref{th: M is non-basic => 2n-1 < |M| < dn - (d-3)}]
  To obtain the upper bound, it is sufficient to use Theorem~\ref{th: M is basic => |M| < dn - (d-2)}. To obtain the lower bound, suppose that $|M|<2n$. This supposition immediately implies that there exists a layer $L$ such that $|L\cap M|<2$. Due to the conditions of the statement, $L\cap M\ne\emptyset$, hence, there is a single element $\x\in L\cap M$. As a consequence, if $M$ is non-basic, so is $M\setminus\{\x\}$. Therefore, $M$ is not minimal, which contradicts our supposition.
\end{proof}

\begin{remark} \label{Remark: lower bound is reachable}
  The lower bound of Theorem~\ref{th: M is non-basic => 2n-1 < |M| < dn - (d-3)} cannot be improved. To ensure, consider
  $$
   M = \{(k,k,\ldots,k), (k+1,k,\ldots,k) \mid k\in[n-1]\} \cup \{n,n,\ldots,n\} \cup \{1,n,\ldots,n\}.
  $$
  (see Fig.~\ref{figure: minimal non-basic set in 4x4x4}). We can see, that $|M|=2n$ and $M$ is non-basic, since it is annihilated by
  $$
   f = \sum\limits_{k=1}^{n-1} \Big(\textbf{1}_{(k,k,\ldots,k)} - \textbf{1}_{(k+1,k,\ldots,k)}\Big) + \Big(\textbf{1}_{(n,n,\ldots,n)} - \textbf{1}_{(1,n,\ldots,n)}\Big).
  $$

\begin{center}
\begin{tikzpicture}[scale=1.2,x={(1cm,0cm)},y={(0.3cm,0.2cm)},z={(0cm,1cm)},line width=.5pt]
 \begin{scope}
  \coordinate (a) at (0,0,0);
  \coordinate (b) at (1,0,0);
  \coordinate (c) at (1,1,1);
  \coordinate (d) at (2,1,1);
  \coordinate (e) at (2,2,2);
  \coordinate (f) at (3,2,2);
  \coordinate (g) at (3,3,3);
  \coordinate (h) at (0,3,3);
  \foreach \p in {0,1,2,3}{
    \draw [line width=.8pt] (0,\p,3)--++(3,0,0)--++(0,0,-3);
    \draw [line width=.8pt] (0,0,\p)--++(3,0,0)--++(0,3,0);
  }
  \foreach \p in {0,1,2}{
    \draw [line width=.8pt] (\p,0,0)--++(0,0,3)--++(0,3,0);
    \draw [dashed] (0,\p+1,3)--++(0,0,-3)--++(3,0,0);
    \draw [dashed] (\p,0,0)--++(0,3,0)--++(0,0,3);
  }
  \foreach \p in {1,2}{
   \draw [dashed] (0,0,\p)--++(0,3,0)--++(3,0,0);
   \foreach \q in {1,2}{
    \draw [dashed] (0,\p,\q)--++(3,0,0);
    \draw [dashed] (\p,0,\q)--++(0,3,0);
    \draw [dashed] (\p,\q,0)--++(0,0,3);
   }
  }
  \foreach \x in {0,1,2,3}{
   \foreach \y in {0,1,2,3}{
    \foreach \z in {0,1,2,3}{
     \filldraw[white] (\x,\y,\z) circle (1.7pt);
     \draw (\x,\y,\z) circle (1.7pt);
    }
   }
  }
  \foreach \p in {a,b,c,d,e,f,g,h}
    \filldraw (\p) circle (1.7pt);
  \refstepcounter{ris}
  \draw (1.5,1.5,-0.7) node {Figure \arabic{ris}.\label{figure: minimal non-basic set in 4x4x4}};
 \end{scope}
\end{tikzpicture}
\end{center}

 On the other hand, it is not clear whether it is possible to improve the upper bound or not. For instance, if we add an element to the basic subset   
  $$
   M = \{(k,1,\ldots,1), (1,k,\ldots,1), \ldots, (1,1,\ldots,k) \mid k\in[n]\}
  $$
  discussed in the proof of Theorem~\ref{th: M is basic => |M| < dn - (d-2)}, then we get a non-basic set which is not minimal. Say, if we add $\x=(x_1,\ldots,x_d)$, where $x_k>1$ for all $k\in[d]$, then the set $M\cup\{\x\}$ is annihilated by the function
  $$
   (d-1)\mathbf{1}_{(1,1,\ldots,1)} +
   \mathbf{1}_{\x} -
   \big(\mathbf{1}_{(x_1,1,\ldots,1)} +
    \mathbf{1}_{(1,x_2,\ldots,1)} +
    \ldots +
    \mathbf{1}_{(1,1,\ldots,x_d)}\big),
  $$
  and hence, $M\cup\{\x\}$ has a non-basic subset of size $d+2$.
\end{remark}

\section{Criterion for certain subsets to be basic} \label{Section:graphs_app}

The main goal of this section is to prove Theorem~\ref{th_set_to_graph} and Theorem~\ref{th_basis_graph}. The second part of the section assumes that the reader is familiar with the basics of hypergraph theory. For an extensive account of this topic, we refer, for example, to~\cite{Bretto2013}.

\begin{proof}[Proof of Theorem~\ref{th_set_to_graph}]
 Let us assume that there is an appropriate coloring of a~subset $K\subset M$. Then this coloring corresponds to the annihilation function $f\colon K\to\{\pm 1\}$. Extending $f$ to the set $M$ by defining $f(\x)=0$ for $\x\in M\setminus K$, we obtain an annihilation function of $M$. Hence, by Lemma~\ref{lemma: M is non-basic <=> annihilation weight function}, $M$~is non-basic.
 
 Conversely, let $M$ be non-basic. By Lemma~\ref{lemma: M is non-basic <=> annihilation weight function} this implies that there exists a~non-trivial annihilation function $f$ of $M$. Denote $K\subset M$ to be the domain of $f$ and define a function $g\colon M\to\{-1,0,1\}$ by
 \[
  g(\x) = \left\{
  \begin{array}{rl}
   f(\x)/|f(\x)|, & \mbox{if } \x\in K\\
   0, & \mbox{if } \x\in M\setminus K.
  \end{array}
  \right.
 \]
 Since every layer contains two or zero elements of $M$, the function $g$ is annihilation. Hence, it provides us a desired coloring.
\end{proof}

 A subset $M\subset[n]^d$ can be naturally considered as a~hypergraph whose vertices are elements of~$M$ and hyperedges are subsets of elements of $M$ that lie in the same layer. We denote this hypergraph~$G(M)$. If every layer contains two or zero elements of $M$, then the hypergraph becomes a~graph, possibly with multiple edges.

 The notion of finite basic subsets can be naturally extended to hypergraphs as follows.

 \begin{definition} \label{basis_graph}
  We call a hypergraph $G = (V, E)$ \emph{basic}, if for any vertex weight function $w_V \colon V \to \mathbb{R}$ there exists an edge weight function $w_E \colon E \to \mathbb{R}$ such that the weight of any vertex is equal to the sum of the weights of the edges incident to this vertex. In other words, for any $v \in V$:
  \begin{equation}\label{eq: basic graph condition}
   w_V(v) = \sum\limits_{e\colon v \in e} w_E(e).
  \end{equation}
 \end{definition}

 \begin{lemma} \label{lemma: set_to_hypergraph}
  A subset $M\subset[n]^d$ is basic if and only if the corresponding hypergraph~$G(M)$ is basic.
 \end{lemma}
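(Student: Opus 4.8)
The plan is to show that the linear system whose consistency defines basicness of $M$ is, after a harmless relabeling, identical to the system defining basicness of the hypergraph $G(M)$, so that the two notions coincide right-hand side by right-hand side. First I would fix the dictionary between the two pictures. The vertices of $G(M)$ are the elements of $M$, and its hyperedges are the nonempty intersections $L\cap M$ as $L$ ranges over the layers of $[n]^d$; I keep these hyperedges labeled by the layers that produce them, so that two distinct layers yield two distinct hyperedges even when $L\cap M = L'\cap M$ as vertex sets (this is exactly the ``multiple edges'' phenomenon noted just before the lemma). Under this labeling, a vertex $\x=(x_1,\dots,x_d)$ is incident to precisely $d$ hyperedges, one for each direction $i\in[d]$, namely the hyperedge of the layer $x_i=x_i$ passing through $\x$.

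Next I would identify the data carried by each system. A vertex weight function $w_V\colon V\to\mathbb{R}$ is literally a function $f\colon M\to\mathbb{R}$, and an edge weight function $w_E$ assigns a real number to each nonempty layer. Writing $X_{ij}$ for the value that $w_E$ takes on the hyperedge coming from the layer $x_i=j$ recovers exactly the variables of system~\eqref{eq: basic set condition, algebraic form}. With this identification, the hypergraph equation~\eqref{eq: basic graph condition} evaluated at $\x$ becomes $w_V(\x)=\sum_{i=1}^d w_E(e_i)=\sum_{i=1}^d X_{ix_i}$, where $e_i$ denotes the hyperedge through $\x$ orthogonal to the $i$-th axis. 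This is precisely equation~\eqref{eq: basic set condition, algebraic form} for $\x$, so the two systems agree equation by equation, i.e. the vertex--edge incidence matrix of $G(M)$ is the submatrix of $A_M$ obtained by keeping only the columns indexed by nonempty layers.

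The only point requiring care is the presence of empty layers. System~\eqref{eq: basic set condition, algebraic form} has a variable $X_{ij}$ for every pair $(i,j)\in[d]\times[n]$, whereas $G(M)$ has no hyperedge for a layer disjoint from $M$; in matrix terms, the columns of $A_M$ indexed by empty layers are zero columns. Such a variable occurs in no equation, so it is free and deleting it changes neither the consistency of the system nor its solution set for any fixed right-hand side. Hence discarding the variables indexed by empty layers converts system~\eqref{eq: basic set condition, algebraic form} into the basicness system~\eqref{eq: basic graph condition} for $G(M)$ without affecting solvability.

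Finally I would conclude by quantifying over the right-hand side: for each fixed $f=w_V$, system~\eqref{eq: basic set condition, algebraic form} is consistent if and only if system~\eqref{eq: basic graph condition} is, since they are the same system after the harmless deletion above. As $M$ is basic exactly when~\eqref{eq: basic set condition, algebraic form} is consistent for every $f$, and $G(M)$ is basic exactly when~\eqref{eq: basic graph condition} is consistent for every $w_V$, the two properties are equivalent. I expect no genuine obstacle here: the entire content is the bookkeeping of the correspondence, with the only subtleties being the layer-labeling of hyperedges (to accommodate coinciding vertex sets) and the removal of the variables associated with empty layers.
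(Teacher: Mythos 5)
Your proof is correct and follows essentially the same route as the paper: the paper's argument is exactly this natural bijection between $f$ and $w_V$ and between the collections $f_1,\ldots,f_d$ and $w_E$, which you simply spell out in full. Your explicit treatment of the two bookkeeping subtleties (labeling hyperedges by layers to handle coinciding vertex sets, and discarding the free variables attached to empty layers) makes rigorous what the paper leaves implicit.
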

 \begin{proof}
  It follows directly from the observation that there is a natural bijection between the function $f$ in Eq.~\eqref{eq: basic set condition} and the vertex weight function $w_V$ in Eq.~\eqref{eq: basic graph condition}. Thus, a collection of functions $f_1, \ldots, f_d$ determines the edge weight function $w_E$ and vice versa.
 \end{proof}

 \begin{definition} \label{co-boundary}
  Let $G = (V, E)$ be a hypergraph. The \emph{co-boundary} of a~vertex $v \in V$ is an edge weight function $\delta_v$ defined be the formula
  $$
   \delta_v(e) = \left\{\begin{array}{cc}
    1, & v \in e \\
    0, & v \notin e.
   \end{array}\right.
  $$
  In other words, the co-boundary $\delta_v$ is the indicator function of the subset of all edges incident to $v$. Also, the reader can interpret it as a row of the incidence matrix of $G$ corresponding to the vertex $v$.
 \end{definition}
 \begin{remark}
  The notion of co-boundary comes from \cite{Prasolov2006}, although there it has a~slightly different form. Note, that by the co-boundary of a vertex of a~hypergraph we also mean a particular case of the co-boundary of a vertex of a graph.
 \end{remark}

 \begin{lemma} \label{claim1} 
  A hypergraph $G = (V, E)$ is basic if and only if the co-boundaries of its vertices are linearly independent in $\mathbb{R}^{|E|}$.
 \end{lemma}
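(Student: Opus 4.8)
The plan is to recast Definition~\ref{basis_graph} as a statement about the incidence matrix of $G$ and then read off the claim from elementary linear algebra. First I would introduce the incidence matrix $B$ of $G$, namely the $|V|\times|E|$ matrix whose entry in row $v$ and column $e$ equals $1$ when $v\in e$ and $0$ otherwise. By construction the $v$-th row of $B$ is precisely the co-boundary $\delta_v$, regarded as a vector in $\mathbb{R}^{|E|}$.

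Next I would rewrite the defining relation~\eqref{eq: basic graph condition}. For a fixed edge weight function $w_E\in\mathbb{R}^{|E|}$, the right-hand side $\sum_{e\colon v\in e}w_E(e)$ is exactly the $v$-th coordinate of the vector $Bw_E$. Hence the requirement that $w_V(v)=\sum_{e\colon v\in e}w_E(e)$ hold for every $v$ is the single matrix equation $Bw_E=w_V$, where $w_V\in\mathbb{R}^{|V|}$ is now read as a column vector. Consequently $G$ is basic if and only if, for every $w_V\in\mathbb{R}^{|V|}$, the system $Bw_E=w_V$ admits a solution $w_E$; that is, if and only if the linear map $w_E\mapsto Bw_E$ from $\mathbb{R}^{|E|}$ to $\mathbb{R}^{|V|}$ is surjective.

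The final step is to translate surjectivity into a statement about the rows. The map defined by $B$ is onto $\mathbb{R}^{|V|}$ precisely when $\operatorname{rank}B=|V|$, i.e. when $B$ has full row rank, which for a matrix with $|V|$ rows is equivalent to those rows being linearly independent. Since the rows of $B$ are the co-boundaries $\delta_v$ living in $\mathbb{R}^{|E|}$, this yields exactly the asserted equivalence.

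I do not anticipate a genuine obstacle here, as the argument is a direct dictionary between the functional formulation and the incidence matrix; the only point requiring care is the bookkeeping with the shape of $B$, so that basicness is matched with surjectivity (full row rank) rather than with injectivity, and the co-boundaries are correctly seen as vectors in $\mathbb{R}^{|E|}$, the space indexing the columns. It is worth noting that this lemma is the exact hypergraph analogue of the observation used in the proof of Lemma~\ref{lemma: M is non-basic <=> annihilation weight function}: the matrix $A_M$ there is nothing but the incidence matrix of $G(M)$ (with vertices as rows and layers as columns), so non-basicness of $M$ as a linear dependence among the rows of $A_M$ is precisely linear dependence among the co-boundaries, and Lemma~\ref{claim1} restates that fact in the broader setting of hypergraphs.
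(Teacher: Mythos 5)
Your proof is correct and follows essentially the same route as the paper's: both encode condition~\eqref{eq: basic graph condition} as the matrix equation $A\,w_E = w_V$ with the incidence matrix, and identify basicness (solvability for every $w_V$) with surjectivity of $w_E \mapsto A w_E$, i.e.\ full row rank, i.e.\ linear independence of the co-boundaries in $\mathbb{R}^{|E|}$. Your write-up merely makes explicit the surjectivity/rank bookkeeping that the paper leaves implicit.
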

 \begin{proof}
 Let $V = \{v_1,\ldots,v_n\}$, $E = \{e_1,\ldots,e_m\}$ and $A$ be the incidence matrix of the hypergraph $G$. Then $G$ is basic if and only if for any vertex weight function $w_V$ there exists an edge weight function $w_E$ such that
 $$
  A\begin{pmatrix}
    w_E(e_1) \\
    \vdots  \\
    w_E(e_m)  \\
  \end{pmatrix} =
  \begin{pmatrix}
    w_V(v_1) \\
    \vdots  \\
    w_V(v_n)  \\
  \end{pmatrix},
 $$
 meaning, that rows of $A$ (i.e. co-boundaries) are linearly independent.
\end{proof}

\begin{lemma} \label{claim2} 
 The co-boundaries of vertices of a graph $G=(V,E)$ are linearly independent in $\mathbb{R}^{|E|}$ if and only if $G$ does not contain a bipartite connected component.
\end{lemma}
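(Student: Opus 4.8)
The plan is to translate the linear-algebraic condition into a combinatorial statement about sign assignments on vertices. First I would unwind the definition of linear dependence: a nontrivial relation $\sum_{v \in V} c_v \delta_v = 0$ holds precisely when, for every edge $e = \{u,w\} \in E$, the $e$-coordinate of the left-hand side vanishes, i.e. $c_u + c_w = 0$ (each edge of a graph is incident to exactly its two endpoints, so only $\delta_u$ and $\delta_w$ contribute to coordinate $e$). Thus the co-boundaries are linearly dependent if and only if there is a function $c \colon V \to \mathbb{R}$, not identically zero, satisfying $c_u = -c_w$ for every edge $\{u,w\}$.

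Next I would observe that these constraints never relate vertices lying in distinct connected components, so the space of solutions $c$ decomposes as a direct sum over the connected components of $G$. Consequently a nontrivial global solution exists if and only if some connected component admits a nontrivial solution, and it suffices to analyse a single connected component $C$.

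On such a component I would establish the equivalence with bipartiteness. If $C$ is bipartite with parts $A$ and $B$, then setting $c_v = 1$ for $v \in A$ and $c_v = -1$ for $v \in B$ satisfies $c_u + c_w = 0$ on every edge and is nontrivial, yielding a dependence. For the converse I would fix a vertex $v_0 \in C$ and propagate the relation along walks: any vertex $v$ joined to $v_0$ by a walk of length $\ell$ must satisfy $c_v = (-1)^\ell c_{v_0}$. If $C$ is not bipartite it contains an odd cycle, hence an odd closed walk through $v_0$, which forces $c_{v_0} = -c_{v_0}$, so $c_{v_0} = 0$ and then $c \equiv 0$ on all of $C$. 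Therefore a nontrivial solution on $C$ exists exactly when $C$ is bipartite.

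Combining these, the co-boundaries are linearly dependent if and only if at least one connected component is bipartite; negating both sides gives linear independence if and only if $G$ has no bipartite connected component, as claimed. I expect the crux to be the characterization on a single connected component, specifically showing via the odd-closed-walk argument that a non-bipartite component admits only the trivial solution; this rests on the standard fact that a connected graph is bipartite precisely when all its closed walks have even length. I would also remark that isolated vertices, viewed as single-vertex (hence bipartite) components, correctly yield the zero co-boundary and thus a dependence, and that multiple edges leave the argument intact, since parallel edges impose the same constraint $c_u + c_w = 0$ and create only even cycles.
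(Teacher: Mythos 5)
Your proof is correct and takes essentially the same route as the paper's: both unwind a linear dependence into the edge constraint $c_u = -c_w$ and then characterize, component by component, when a nontrivial sign assignment exists in terms of bipartiteness. The only difference is stylistic: you argue the non-bipartite direction contrapositively via odd closed walks (and handle zero-coefficient components cleanly through the direct-sum decomposition, a point the paper glosses over), whereas the paper reads the bipartition directly off the sign classes of a nontrivial dependence.
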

\begin{proof}
 Assume that there is a linear dependence
 \begin{equation}\label{eq: dependence}
  \sum \limits_{i = 1}^{n} \lambda_i \delta_{v_i} = 0.
 \end{equation}
 If there is an edge between vertices $v_i$ and $v_j$, then $\lambda_i = -\lambda_j$. Thus, vertices in every connected component are divided into two equivalence classes with coefficients $\lambda_i$ and $-\lambda_i$ respectively. Vertices from each of the classes are adjacent only to vertices from the other class. Hence, this component of the graph is bipartite.
 
 Conversely, if $G$ contains a bipartite component $C$ with parts $A$ and $B$, then there exists a linear dependence of form~\eqref{eq: dependence} with
  $$
   \lambda_i = \left\{\begin{array}{rl}
    1, & v_i \in A \\
    -1, & v_i \in B \\
    0, & v_i \notin C.
   \end{array}\right.
  $$
 \end{proof}

\begin{proof}[Proof of Theorem~\ref{th_basis_graph}]
  It is sufficient to apply Lemma~\ref{claim1} and Lemma~\ref{claim2}.
\end{proof}

\begin{corollary}\label{cor:graph_to_set}
 Let the intersection of a subset $M\subset[n]^d$ with each layer consist of two or zero points. Then $M$ is basic if and only if the corresponding graph $G(M)$ does not contain a bipartite connected component.
\end{corollary}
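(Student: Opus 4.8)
The plan is to obtain this statement as a direct concatenation of Lemma~\ref{lemma: set_to_hypergraph} and Theorem~\ref{th_basis_graph}, the only preliminary work being to check that the hypothesis on the layers puts us in the regime where the associated hypergraph is an ordinary graph, so that Theorem~\ref{th_basis_graph} is actually applicable.

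First I would invoke the observation recorded just before Definition~\ref{basis_graph}: once every layer of $[n]^d$ meets $M$ in exactly two or zero points, every non-empty hyperedge of $G(M)$ consists of precisely two vertices, and hence $G(M)$ is a graph (possibly carrying multiple edges, since for $d\geqslant3$ two points of $M$ may share more than one common layer). Because the hypotheses of the corollary are exactly the conditions under which $G(M)$ is a graph rather than a proper hypergraph, Theorem~\ref{th_basis_graph} can be applied to $G(M)$.

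Next I would chain the two equivalences. By Lemma~\ref{lemma: set_to_hypergraph}, the subset $M$ is basic if and only if $G(M)$ is basic in the sense of Definition~\ref{basis_graph}. Since $G(M)$ is a graph, Theorem~\ref{th_basis_graph}---whose right-hand side is precisely the defining property of a basic graph---shows that $G(M)$ is basic if and only if it contains no bipartite connected component. Combining these two biconditionals gives that $M$ is basic if and only if $G(M)$ has no bipartite connected component, which is the assertion.

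The one place that requires a word of care, and the only conceivable obstacle, is the possible presence of multiple edges in $G(M)$. I expect this to be harmless: the argument of Lemma~\ref{claim2}, on which Theorem~\ref{th_basis_graph} rests, uses only the adjacency relation between vertices, and the deduction $\lambda_i=-\lambda_j$ for an edge joining $v_i$ and $v_j$ holds verbatim whether that edge is simple or one of several parallel edges. Hence Theorem~\ref{th_basis_graph} carries over unchanged to multigraphs, and no extra argument is needed to complete the proof.
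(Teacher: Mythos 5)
Your proof is correct and follows exactly the paper's intended route: the corollary is obtained by chaining Lemma~\ref{lemma: set_to_hypergraph} with Theorem~\ref{th_basis_graph}, using the observation that the layer condition makes $G(M)$ a graph (possibly with multiple edges). Your extra remark that Lemma~\ref{claim2} works verbatim for multigraphs is a welcome clarification of a point the paper leaves implicit.
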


\begin{example} \label{example1}
 Let $M = \{(2,1,1), (1,2,1), (1,1,2), (2,2,2)\}$ (Fig.~\ref{figure: basic set, 4 points}). Then the corresponding graph $G(M)$ is the complete graph $K_4$ with four vertices. Since $K_4$ is connected and not bipartite, in accordance with Corollary~\ref{cor:graph_to_set} the set $M$ is basic.
\end{example}

\begin{center}
\begin{tikzpicture}[scale=1.2,x={(1cm,0cm)},y={(0.54cm,0.36cm)},z={(0cm,1cm)},line width=.5pt]
 \begin{scope}
  \coordinate (a1) at (0,0,0);
  \coordinate (a2) at (1,0,0);
  \coordinate (a3) at (1,1,0);
  \coordinate (a4) at (0,1,0);
  \coordinate (b1) at (0,0,1);
  \coordinate (b2) at (1,0,1);
  \coordinate (b3) at (1,1,1);
  \coordinate (b4) at (0,1,1);
  \draw [line width=.8pt] (b1)--(b4)--(b3)--(b2)--(b1)--(a1)--(a2)--(b2);
  \draw [line width=.8pt] (a2)--(a3)--(b3);
  \draw [dashed] (a1)--(a4);
  \draw [dashed] (b4)--(a4)--(a3);
  \foreach \p in {a1,a3,b2,b4}{
    \filldraw[white] (\p) circle (1.7pt);
    \draw (\p) circle (1.7pt);
  }
  \foreach \p in {a2,a4,b1,b3}
    \filldraw (\p) circle (1.7pt);
  \refstepcounter{ris}
  \draw (0.5,0.5,-0.6) node {Figure \arabic{ris}.\label{figure: basic set, 4 points}};
 \end{scope}
\end{tikzpicture}
\end{center}

\section{Structure of non-basic subsets}\label{Section:structure_of_non-basic_sets}



  By Lemma~\ref{lemma: M is min. non-basic <=> annihilation weight function is unique}, if a non-basic subset $M \subset [n]^d$ is minimal, then there exists the unique annihilation function up to multiplying by a constant. Since the matrix~$A_M$ has integer entries, it is possible to choose the annihilation function $f$ with integer values that are setwise coprime integers, so that the notion of irreducible annihilation function is well-defined. The name \emph{irreducible} is chosen by analogy with integer fractions.

\begin{example} \label{example: non-basic 5 points in 2x2x2}
  If, as it is shown in Fig.~\ref{figure: non-basic set, 5 points},
  $$
    M = \{(1,1,1), (2,1,1), (1,2,1), (1,1,2), (2,2,2)\},
  $$
  then the irreducible annihilation function of $M$ has the following form:
  $$
   f = 2\cdot\textbf{1}_{(1,1,1)} -
     		 \textbf{1}_{(2,1,1)} -
       		 \textbf{1}_{(1,2,1)} -
       		 \textbf{1}_{(1,1,2)} +
       		 \textbf{1}_{(2,2,2)}
  $$
\end{example}

\begin{center}
\begin{tikzpicture}[scale=1.2,x={(1cm,0cm)},y={(0.54cm,0.36cm)},z={(0cm,1cm)},line width=.5pt]
 \begin{scope}
  \coordinate (a1) at (0,0,0);
  \coordinate (a2) at (1,0,0);
  \coordinate (a3) at (1,1,0);
  \coordinate (a4) at (0,1,0);
  \coordinate (b1) at (0,0,1);
  \coordinate (b2) at (1,0,1);
  \coordinate (b3) at (1,1,1);
  \coordinate (b4) at (0,1,1);
  \draw [line width=.8pt] (b1)--(b4)--(b3)--(b2)--(b1)--(a1)--(a2)--(b2);
  \draw [line width=.8pt] (a2)--(a3)--(b3);
  \draw [dashed] (a1)--(a4);
  \draw [dashed] (b4)--(a4)--(a3);
  \foreach \p in {a3,b2,b4}{
    \filldraw[white] (\p) circle (1.7pt);
    \draw (\p) circle (1.7pt);
  }
  \foreach \p in {a1,a2,a4,b1,b3}
    \filldraw (\p) circle (1.7pt);
  \refstepcounter{ris}
  \draw (0.5,0.5,-0.6) node {Figure \arabic{ris}.\label{figure: non-basic set, 5 points}};
 \end{scope}
\end{tikzpicture}
\end{center}

\begin{proof}[Proof of Theorem~\ref{th_irreducible_annihilation_function_is_unbounded}]

  Let $a_1,\ldots,a_m,b_1,\ldots,b_m,c_1,\ldots,c_m,d,e$ be different integers. Define the set $M$ to consist of the following $6m+2$ elements (see Fig.~\ref{figure: non-basic set, 6k+2 points}):
  \begin{itemize}
    \item $3m$ points with coordinates $(d,a_k,a_k)$, $(b_k,d,b_k)$, $(c_k,c_k,d)$, $k \in [m]$;
    \item $3m$ points with coordinates $(e,a_k,a_{k+1})$, $(b_k,e,b_{k+1})$, $(c_k,c_{k+1},e)$,\\ $k \in [m]$ (here, we assume that $a_{m+1}=a_1$, $b_{m+1}=b_1$ and $c_{m+1}=c_1$);
    \item $1$ point with coordinates $(d,d,d)$;
    \item $1$ point with coordinates $(e,e,e)$.
  \end{itemize}

\begin{center}
\begin{tikzpicture}[line width=.8pt]
 \begin{scope}
  \filldraw[rounded corners, green, opacity = 0.2] (6.0,2.3) -- (-0.3,1.2) .. controls (-0.5,0.5) and (3.5,0.5) .. (3.3,1.2)--(6.0,2)--cycle;
  \filldraw[rounded corners, yellow, opacity = 0.4] (5.5,2.3) -- (3.7,1.2) .. controls (3.5,0.5) and (7.5,0.5) .. (7.3,1.2)--cycle;
  \filldraw[rounded corners, red, opacity = 0.2] (5.0,2.3) -- (11.3,1.2) .. controls (11.5,0.5) and (7.5,0.5) .. (7.7,1.2)--(5.0,2)--cycle;
  \draw[rounded corners] (6.0,2.3) -- (-0.3,1.2) .. controls (-0.5,0.5) and (3.5,0.5) .. (3.3,1.2)--(6.0,2)--cycle;
  \draw[rounded corners] (5.5,2.3) -- (3.7,1.2) .. controls (3.5,0.5) and (7.5,0.5) .. (7.3,1.2)--cycle;
  \draw[rounded corners] (5.0,2.3) -- (11.3,1.2) .. controls (11.5,0.5) and (7.5,0.5) .. (7.7,1.2)--(5.0,2)--cycle;
  \filldraw[rounded corners, red, opacity = 0.2] (6.0,-1.3) -- (-0.3,-0.2) .. controls (-0.5,0.5) and (3.5,0.5) .. (3.3,-0.2)--(6.0,-1)--cycle;
  \filldraw[rounded corners, yellow, opacity = 0.4] (5.5,-1.3) -- (3.7,-0.2) .. controls (3.5,0.5) and (7.5,0.5) .. (7.3,-0.2)--cycle;
  \filldraw[rounded corners, green, opacity = 0.2] (5.0,-1.3) -- (11.3,-0.2) .. controls (11.5,0.5) and (7.5,0.5) .. (7.7,-0.2)--(5.0,-1)--cycle;
  \draw[rounded corners] (6.0,-1.3) -- (-0.3,-0.2) .. controls (-0.5,0.5) and (3.5,0.5) .. (3.3,-0.2)--(6.0,-1)--cycle;
  \draw[rounded corners] (5.5,-1.3) -- (3.7,-0.2) .. controls (3.5,0.5) and (7.5,0.5) .. (7.3,-0.2)--cycle;
  \draw[rounded corners] (5.0,-1.3) -- (11.3,-0.2) .. controls (11.5,0.5) and (7.5,0.5) .. (7.7,-0.2)--(5.0,-1)--cycle;
  \foreach \x in {0,...,2}{
   \foreach \y in {0,...,2}{
    \draw (4*\x+\y,0)--++(1,1);
   }
   \draw (4*\x,1)--++(3,-1);
  }
  \foreach \x in {0,...,11}{
    \draw (\x,0)--++(0,1);
    \filldraw (\x,0) circle (1.7pt);
    \filldraw (\x,1) circle (1.7pt);
  }
  \filldraw (5.5,2) circle (1.7pt);
  \filldraw (5.5,-1) circle (1.7pt);
  \refstepcounter{ris}
  \draw (5.5,-1.8) node {Figure \arabic{ris}.\label{figure: non-basic set, 6k+2 points}};
 \end{scope}
\end{tikzpicture}
\end{center}
  Then the function $f$ taking the values $1$, $-1$, $-m$ and $m$ at the points of first, second, third and fourth groups respectively is irreducible annihilation.
  
  Now we show that the set $M$ is minimal. Indeed, $2m$ points with coordinates $(d,a_k,a_k)$ and $(e,a_k,a_{k+1})$ constitute a cycle of order $2m$, whose elements belong (or do not belong) to a minimal subset $N$ of $M$ simultaneously. Two other cycles behave the same way. To finish the proof, we need to mention that points $(d,d,d)$ and $(e,e,e)$ belong to $N$ for sure, and if $(e,e,e)$ belongs to $N$, then each of three cycles does as well. Hence, all $6m+2$ points are in~$N$ and $N=M$.
\end{proof}

\begin{definition}\label{def: simple annihilation function}
  Let $\p,\q,\r,\s$ be the consecutive vertices of a rectangle whose sides are parallel to the coordinate axes. A \emph{simple annihilation function} is
  $$
   f_{\p\q\r\s} = \textbf{1}_{\p} -  \textbf{1}_{\q} + \textbf{1}_{\r} - \textbf{1}_{\s}.
  $$
\end{definition}

  By double induction on $d$ and $n$, one can verify that every annihilation function of $[n]^d$ can be decomposed into a~finite sum of simple annihilation functions (Theorem~\ref{thBoyarov}). The proof is routine and will be omitted. 

\begin{example} \label{example: non-basic 5 points in 2x2x2 revisited}
  For the set $M$ from Example~\ref{example: non-basic 5 points in 2x2x2}, its annihilation function
  $$
   f = 2\cdot\textbf{1}_{(1,1,1)} -
     		 \textbf{1}_{(2,1,1)} -
       		 \textbf{1}_{(1,2,1)} -
       		 \textbf{1}_{(1,1,2)} +
       		 \textbf{1}_{(2,2,2)}
  $$
  is decomposed as the sum of simple annihilation functions
  $$
   \textbf{1}_{(1,1,1)} -
   \textbf{1}_{(1,2,1)} +
   \textbf{1}_{(2,2,1)} -
   \textbf{1}_{(2,1,1)},
  $$
  $$
   \textbf{1}_{(1,1,1)} -
   \textbf{1}_{(1,1,2)} +
   \textbf{1}_{(2,1,2)} -
   \textbf{1}_{(2,1,1)}
  $$
  and  
  $$
   \textbf{1}_{(2,1,1)} -
   \textbf{1}_{(2,2,1)} +
   \textbf{1}_{(2,2,2)} -
   \textbf{1}_{(2,1,2)}.
  $$
\end{example}

\section{Conclusion} \label{Section:conclusion}

  As we have seen in Section~\ref{Section:estimations}, Theorem~\ref{th: M is non-basic => 2n-1 < |M| < dn - (d-3)} provides some bounds for the number of elements in certain non-basic subsets of $[n]^d$. While the lower bound is reachable (see Remark~\ref{Remark: lower bound is reachable}), it is not clear, whether the upper bound and the intermediate values are reachable as well. This observation leads us to the following question.

\begin{question}\label{question: reachability}
  Are the intermediate values in Theorem~\ref{th: M is non-basic => 2n-1 < |M| < dn - (d-3)} reachable? In other words, is it true that for any integer $k$, $2n < k \leqslant dn - (d-2)$, there exists a~minimal non-basic subset $M\subset[n]^d$ of size $k$ such that every layer of~$[n]^d$ has a non-empty intersection with $M$?
\end{question}

  Figures~\ref{figure: non-basic sets in 3x3x3},~\ref{figure: non-basic sets, n=4, |M|=8,9} and~\ref{figure: non-basic sets, n=4, |M|=10,11} shows that for $d=3$, $n\in\{3,4\}$ the answer to Question~\ref{question: reachability} is positive (here, a point color indicates the value of the irreducible annihilation function: red, blue, green and black are reserved for $-1$, $1$, $-2$ and $2$ respectively).
\begin{center}
\begin{tikzpicture}[scale=1.2,x={(1cm,0cm)},y={(0.4cm,0.3cm)},z={(0cm,1cm)},line width=.5pt]
 \begin{scope}
  \coordinate (a) at (2,0,0);
  \coordinate (b) at (2,1,0);
  \coordinate (c) at (1,0,1);
  \coordinate (d) at (1,2,1);
  \coordinate (e) at (0,1,2);
  \coordinate (f) at (0,2,2);
  \foreach \p in {0,1,2}{
    \draw [line width=.8pt] (0,\p,2)--++(2,0,0)--++(0,0,-2);
    \draw [line width=.8pt] (0,0,\p)--++(2,0,0)--++(0,2,0);
  }
  \foreach \p in {0,1}{
    \draw [line width=.8pt] (\p,0,0)--++(0,0,2)--++(0,2,0);
    \draw [dashed] (0,\p+1,2)--++(0,0,-2)--++(2,0,0);
    \draw [dashed] (\p,0,0)--++(0,2,0)--++(0,0,2);
  }
  \draw [dashed] (0,0,1)--++(0,2,0)--++(2,0,0);
  \draw [dashed] (0,1,1)--++(2,0,0);
  \draw [dashed] (1,0,1)--++(0,2,0);
  \draw [dashed] (1,1,0)--++(0,0,2);
  \foreach \x in {0,1,2}{
   \foreach \y in {0,1,2}{
    \foreach \z in {0,1,2}{
     \filldraw[white] (\x,\y,\z) circle (1.7pt);
     \draw (\x,\y,\z) circle (1.7pt);
    }
   }
  }
  \foreach \p in {a,d,e}{
    \filldraw[red] (\p) circle (1.7pt);
    \draw (\p) circle (1.7pt);
  }
  \foreach \p in {b,c,f}{
    \filldraw[blue] (\p) circle (1.7pt);
    \draw (\p) circle (1.7pt);
  }
 \end{scope}
 \begin{scope}[xshift=4cm]
  \coordinate (a) at (2,0,0);
  \coordinate (b) at (2,2,1);
  \coordinate (c) at (1,1,1);
  \coordinate (d) at (1,2,2);
  \coordinate (e) at (0,0,0);
  \coordinate (f) at (0,1,2);
  \coordinate (g) at (0,2,2);
  \foreach \p in {0,1,2}{
    \draw [line width=.8pt] (0,\p,2)--++(2,0,0)--++(0,0,-2);
    \draw [line width=.8pt] (0,0,\p)--++(2,0,0)--++(0,2,0);
  }
  \foreach \p in {0,1}{
    \draw [line width=.8pt] (\p,0,0)--++(0,0,2)--++(0,2,0);
    \draw [dashed] (0,\p+1,2)--++(0,0,-2)--++(2,0,0);
    \draw [dashed] (\p,0,0)--++(0,2,0)--++(0,0,2);
  }
  \draw [dashed] (0,0,1)--++(0,2,0)--++(2,0,0);
  \draw [dashed] (0,1,1)--++(2,0,0);
  \draw [dashed] (1,0,1)--++(0,2,0);
  \draw [dashed] (1,1,0)--++(0,0,2);
  \foreach \x in {0,1,2}{
   \foreach \y in {0,1,2}{
    \foreach \z in {0,1,2}{
     \filldraw[white] (\x,\y,\z) circle (1.7pt);
     \draw (\x,\y,\z) circle (1.7pt);
    }
   }
  }
  \foreach \p in {g}
    \filldraw (\p) circle (1.7pt);
  \foreach \p in {b,d,e,f}{
    \filldraw[red] (\p) circle (1.7pt);
    \draw (\p) circle (1.7pt);
  }
  \foreach \p in {a,c}{
    \filldraw[blue] (\p) circle (1.7pt);
    \draw (\p) circle (1.7pt);
  }
  \refstepcounter{ris}
  \draw (1,1,-0.7) node {Figure \arabic{ris}.\label{figure: non-basic sets in 3x3x3}};
 \end{scope}
 \begin{scope}[xshift=8cm]
  \coordinate (a) at (2,0,0);
  \coordinate (b) at (2,0,1);
  \coordinate (c) at (2,1,0);
  \coordinate (d) at (1,0,0);
  \coordinate (e) at (1,2,2);
  \coordinate (f) at (0,1,2);
  \coordinate (g) at (0,2,1);
  \coordinate (h) at (0,2,2);
  \foreach \p in {0,1,2}{
    \draw [line width=.8pt] (0,\p,2)--++(2,0,0)--++(0,0,-2);
    \draw [line width=.8pt] (0,0,\p)--++(2,0,0)--++(0,2,0);
  }
  \foreach \p in {0,1}{
    \draw [line width=.8pt] (\p,0,0)--++(0,0,2)--++(0,2,0);
    \draw [dashed] (0,\p+1,2)--++(0,0,-2)--++(2,0,0);
    \draw [dashed] (\p,0,0)--++(0,2,0)--++(0,0,2);
  }
  \draw [dashed] (0,0,1)--++(0,2,0)--++(2,0,0);
  \draw [dashed] (0,1,1)--++(2,0,0);
  \draw [dashed] (1,0,1)--++(0,2,0);
  \draw [dashed] (1,1,0)--++(0,0,2);
  \foreach \x in {0,1,2}{
   \foreach \y in {0,1,2}{
    \foreach \z in {0,1,2}{
     \filldraw[white] (\x,\y,\z) circle (1.7pt);
     \draw (\x,\y,\z) circle (1.7pt);
    }
   }
  }
  \foreach \p in {h}
    \filldraw (\p) circle (1.7pt);
  \foreach \p in {a}{
    \filldraw[green] (\p) circle (1.7pt);
    \draw (\p) circle (1.7pt);
  }
  \foreach \p in {e,f,g}{
    \filldraw[red] (\p) circle (1.7pt);
    \draw (\p) circle (1.7pt);
  }
  \foreach \p in {b,c,d}{
    \filldraw[blue] (\p) circle (1.7pt);
    \draw (\p) circle (1.7pt);
  }
 \end{scope}
\end{tikzpicture}
\end{center}

\begin{center}
\begin{tikzpicture}[scale=1.2,x={(1cm,0cm)},y={(0.3cm,0.2cm)},z={(0cm,1cm)},line width=.5pt]
 \begin{scope}
  \coordinate (a) at (0,0,0);
  \coordinate (b) at (1,0,0);
  \coordinate (c) at (1,1,1);
  \coordinate (d) at (2,1,1);
  \coordinate (e) at (2,2,2);
  \coordinate (f) at (3,2,2);
  \coordinate (g) at (3,3,3);
  \coordinate (h) at (0,3,3);
  \foreach \p in {0,1,2,3}{
    \draw [line width=.8pt] (0,\p,3)--++(3,0,0)--++(0,0,-3);
    \draw [line width=.8pt] (0,0,\p)--++(3,0,0)--++(0,3,0);
  }
  \foreach \p in {0,1,2}{
    \draw [line width=.8pt] (\p,0,0)--++(0,0,3)--++(0,3,0);
    \draw [dashed] (0,\p+1,3)--++(0,0,-3)--++(3,0,0);
    \draw [dashed] (\p,0,0)--++(0,3,0)--++(0,0,3);
  }
  \foreach \p in {1,2}{
   \draw [dashed] (0,0,\p)--++(0,3,0)--++(3,0,0);
   \foreach \q in {1,2}{
    \draw [dashed] (0,\p,\q)--++(3,0,0);
    \draw [dashed] (\p,0,\q)--++(0,3,0);
    \draw [dashed] (\p,\q,0)--++(0,0,3);
   }
  }
  \foreach \x in {0,1,2,3}{
   \foreach \y in {0,1,2,3}{
    \foreach \z in {0,1,2,3}{
     \filldraw[white] (\x,\y,\z) circle (1.7pt);
     \draw (\x,\y,\z) circle (1.7pt);
    }
   }
  }
  \foreach \p in {a,c,e,g}{
    \filldraw[red] (\p) circle (1.7pt);
    \draw (\p) circle (1.7pt);
  }
  \foreach \p in {b,d,f,h}{
    \filldraw[blue] (\p) circle (1.7pt);
    \draw (\p) circle (1.7pt);
  }
 \end{scope}
 \begin{scope}[xshift=5cm]
  \coordinate (a) at (0,3,3);
  \coordinate (b) at (0,3,2);
  \coordinate (c) at (0,2,3);
  \coordinate (d) at (1,3,3);
  \coordinate (e) at (1,0,1);
  \coordinate (f) at (2,2,0);
  \coordinate (g) at (3,1,2);
  \coordinate (h) at (2,1,1);
  \coordinate (i) at (3,0,0);
  \foreach \p in {0,1,2,3}{
    \draw [line width=.8pt] (0,\p,3)--++(3,0,0)--++(0,0,-3);
    \draw [line width=.8pt] (0,0,\p)--++(3,0,0)--++(0,3,0);
  }
  \foreach \p in {0,1,2}{
    \draw [line width=.8pt] (\p,0,0)--++(0,0,3)--++(0,3,0);
    \draw [dashed] (0,\p+1,3)--++(0,0,-3)--++(3,0,0);
    \draw [dashed] (\p,0,0)--++(0,3,0)--++(0,0,3);
  }
  \foreach \p in {1,2}{
   \draw [dashed] (0,0,\p)--++(0,3,0)--++(3,0,0);
   \foreach \q in {1,2}{
    \draw [dashed] (0,\p,\q)--++(3,0,0);
    \draw [dashed] (\p,0,\q)--++(0,3,0);
    \draw [dashed] (\p,\q,0)--++(0,0,3);
   }
  }
  \foreach \x in {0,1,2,3}{
   \foreach \y in {0,1,2,3}{
    \foreach \z in {0,1,2,3}{
     \filldraw[white] (\x,\y,\z) circle (1.7pt);
     \draw (\x,\y,\z) circle (1.7pt);
    }
   }
  }
  \foreach \p in {a}
    \filldraw (\p) circle (1.7pt);
  \foreach \p in {b,c,d,h,i}{
    \filldraw[red] (\p) circle (1.7pt);
    \draw (\p) circle (1.7pt);
  }
  \foreach \p in {e,f,g}{
    \filldraw[blue] (\p) circle (1.7pt);
    \draw (\p) circle (1.7pt);
  }
 \end{scope}
 \refstepcounter{ris}\label{figure: non-basic sets, n=4, |M|=8,9}
 \draw (5.0,-1.8) node {Figure \arabic{ris}.};
\end{tikzpicture}
\end{center}

\begin{center}
\begin{tikzpicture}[scale=1.2,x={(1cm,0cm)},y={(0.3cm,0.2cm)},z={(0cm,1cm)},line width=.5pt]
 \begin{scope}
  \coordinate (a) at (0,3,3);
  \coordinate (b) at (3,0,0);
  \coordinate (c) at (3,3,2);
  \coordinate (d) at (2,3,0);
  \coordinate (e) at (3,1,3);
  \coordinate (f) at (1,0,3);
  \coordinate (g) at (0,2,0);
  \coordinate (h) at (0,0,1);
  \coordinate (i) at (2,1,1);
  \coordinate (j) at (1,2,2);
  \foreach \p in {0,1,2,3}{
    \draw [line width=.8pt] (0,\p,3)--++(3,0,0)--++(0,0,-3);
    \draw [line width=.8pt] (0,0,\p)--++(3,0,0)--++(0,3,0);
  }
  \foreach \p in {0,1,2}{
    \draw [line width=.8pt] (\p,0,0)--++(0,0,3)--++(0,3,0);
    \draw [dashed] (0,\p+1,3)--++(0,0,-3)--++(3,0,0);
    \draw [dashed] (\p,0,0)--++(0,3,0)--++(0,0,3);
  }
  \foreach \p in {1,2}{
   \draw [dashed] (0,0,\p)--++(0,3,0)--++(3,0,0);
   \foreach \q in {1,2}{
    \draw [dashed] (0,\p,\q)--++(3,0,0);
    \draw [dashed] (\p,0,\q)--++(0,3,0);
    \draw [dashed] (\p,\q,0)--++(0,0,3);
   }
  }
  \foreach \x in {0,1,2,3}{
   \foreach \y in {0,1,2,3}{
    \foreach \z in {0,1,2,3}{
     \filldraw[white] (\x,\y,\z) circle (1.7pt);
     \draw (\x,\y,\z) circle (1.7pt);
    }
   }
  }
  \foreach \p in {a,b}
    \filldraw (\p) circle (1.7pt);
  \foreach \p in {c,d,e,f,g,h}{
    \filldraw[red] (\p) circle (1.7pt);
     \draw (\p) circle (1.7pt);
  }
  \foreach \p in {i,j}{
    \filldraw[blue] (\p) circle (1.7pt);
     \draw (\p) circle (1.7pt);
  }
 \end{scope}
 \begin{scope}[xshift=5cm]
  \coordinate (a) at (0,3,3);
  \coordinate (b) at (3,0,0);
  \coordinate (c) at (1,2,3);
  \coordinate (d) at (1,2,2);
  \coordinate (e) at (1,1,1);
  \coordinate (f) at (2,2,1);
  \coordinate (g) at (2,3,0);
  \coordinate (h) at (3,0,2);
  \coordinate (i) at (0,0,1);
  \coordinate (j) at (3,3,1);
  \coordinate (k) at (0,1,0);
  \coordinate (l) at (2,3,0);
  \foreach \p in {0,1,2,3}{
    \draw [line width=.8pt] (0,\p,3)--++(3,0,0)--++(0,0,-3);
    \draw [line width=.8pt] (0,0,\p)--++(3,0,0)--++(0,3,0);
  }
  \foreach \p in {0,1,2}{
    \draw [line width=.8pt] (\p,0,0)--++(0,0,3)--++(0,3,0);
    \draw [dashed] (0,\p+1,3)--++(0,0,-3)--++(3,0,0);
    \draw [dashed] (\p,0,0)--++(0,3,0)--++(0,0,3);
  }
  \foreach \p in {1,2}{
   \draw [dashed] (0,0,\p)--++(0,3,0)--++(3,0,0);
   \foreach \q in {1,2}{
    \draw [dashed] (0,\p,\q)--++(3,0,0);
    \draw [dashed] (\p,0,\q)--++(0,3,0);
    \draw [dashed] (\p,\q,0)--++(0,0,3);
   }
  }
  \foreach \x in {0,1,2,3}{
   \foreach \y in {0,1,2,3}{
    \foreach \z in {0,1,2,3}{
     \filldraw[white] (\x,\y,\z) circle (1.7pt);
     \draw (\x,\y,\z) circle (1.7pt);
    }
   }
  }
  \foreach \p in {a,b}{
    \filldraw[black] (\p) circle (1.7pt);
     \draw (\p) circle (1.7pt);
  }
  \foreach \p in {c}{
    \filldraw[green] (\p) circle (1.7pt);
     \draw (\p) circle (1.7pt);
  }
  \foreach \p in {g,h,i,j,k}{
    \filldraw[red] (\p) circle (1.7pt);
     \draw (\p) circle (1.7pt);
  }
  \foreach \p in {d,e,f}{
    \filldraw[blue] (\p) circle (1.7pt);
     \draw (\p) circle (1.7pt);
  }
 \end{scope}
  \refstepcounter{ris}
  \draw (5.0,-1.8) node {Figure \arabic{ris}.\label{figure: non-basic sets, n=4, |M|=10,11}};
\end{tikzpicture}
\end{center}

 Surprisingly, in all known examples, including the sets shown in Figs.~\ref{figure: minimal non-basic set in 4x4x4}--\ref{figure: non-basic sets, n=4, |M|=10,11}, the values of irreducible annihilation functions present a specific behavior. This allows us to state the following conjecture.

\begin{conjecture}
  If $M \subset [n]^3$ is a minimal non-basic subset such that every layer of $[n]^3$ has a non-empty intersection with $M$, then its irreducible annihilation function $f$ satisfies
  $$
    \sum\limits_{\x\in M} |f(\x)| = 2\big(|M| - n\big).
  $$
\end{conjecture}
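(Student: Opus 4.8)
The plan is to recast the statement as a problem about primitive integral circulations on a hypergraph and then attempt an induction on the excess $|M|-2n$. View the $3n$ layers as the vertices of a $3$-uniform, tripartite hypergraph $H$ whose hyperedges are the points of $M$ (each point meets exactly one layer in every coordinate direction), so that the irreducible annihilation function $f$ becomes a primitive integral circulation: $\sum_{e\ni v} f(e)=0$ at every vertex $v$, and minimality of $M$ means that the support of $f$ is a matroid circuit on the rows of $A_M$. The first observations are structural and follow from the circuit property together with the hypothesis that no layer is empty: every layer meets $M$ in at least two points (a single point in a layer would force $f=0$ there, contradicting that a circuit has full support), which already recovers $|M|\geqslant 2n$, and the incidence bipartite graph between points and layers is connected (otherwise $M$ would split into two layer-disjoint non-basic pieces, giving a two-dimensional space of annihilation functions and contradicting minimality via Lemma~\ref{lemma: M is min. non-basic <=> annihilation weight function is unique}).

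Next I would reduce the target identity to a statement summed over layers rather than over points. For a layer $L$ put $w_L=\sum_{\x\in L,\,f(\x)>0}f(\x)=\sum_{\x\in L,\,f(\x)<0}|f(\x)|$, the two sums being equal because $f$ annihilates $L$; then $\sum_{\x\in L}|f(\x)|=2w_L$, and since each point lies in exactly three layers one gets $3\sum_{\x}|f(\x)|=2\sum_L w_L$. Using $\sum_L|L\cap M|=3|M|$ together with the fact that there are exactly $3n$ nonempty layers, the conjecture becomes equivalent to the balanced identity
\[
 \sum_L\big(|L\cap M|-1-w_L\big)=0.
\]
I would then record that the summand is not sign-definite: a layer carrying values $+2,+2,-1,-1,-1,-1$ contributes $+1$, while a layer carrying values $+3,-2,-1$ contributes $-1$. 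Thus the identity genuinely expresses a global cancellation forced by minimality, and not a termwise equality.

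The base case $|M|=2n$ is then immediate: every layer contains exactly two points $\x,\y$ with $f(\x)=-f(\y)$, and propagating these equalities through the connected incidence graph forces $|f|$ to be constant, hence equal to $1$ by irreducibility, so $\sum_\x|f(\x)|=2n=2(|M|-n)$. For the inductive step I would pick a layer $L_0$ with $|L_0\cap M|\geqslant 3$ and try to excise or merge points of $L_0$ so as to produce a smaller minimal non-basic subset $M'$ (in the same or a smaller cube, still meeting every layer) with $|M'|-2n'$ strictly smaller, arranged so that $\sum|f|$ drops by exactly twice the drop of $|M|-n$; the reformulated identity above would then be checked by comparing the layer-local quantities $|L\cap M|-1-w_L$ across the reduction.

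The hard part will be this reduction step, and it is the reason the statement is only conjectural. There is no canonical way to shrink a minimal circulation on a $3$-uniform hypergraph: deleting a point breaks the single circuit into an independent set, whereas contracting or identifying coordinate values can destroy minimality, the setwise coprimality defining irreducibility, or the requirement that every layer remain nonempty. Controlling the three invariants $|M|-2n$, the coprime scaling of $f$, and the cross-layer cancellation simultaneously under any such operation is the crux. A successful proof would likely require either a determinantal (Smith-normal-form) formula expressing each $|f(\x)|$ as a cofactor of $A_M$ together with a matching combinatorial reading of these cofactors, or a direct classification of minimal non-basic subsets of $[n]^3$ as explicit closed-walk configurations for which both sides of the identity can be counted; the appearance of the specific factor $2$ suggests that such a classification, rather than a generic rank argument, is what the case $d=3$ demands.
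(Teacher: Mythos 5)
The statement you were asked to prove is stated in the paper only as a conjecture: the paper contains no proof of it, and your proposal, as you yourself say in the closing paragraph, is not a proof either. So the verdict is a genuine gap, though your preliminary reductions are sound and worth keeping. Specifically: minimality forces the irreducible annihilation function $f$ to have full support (a point with $f(\x)=0$ could be deleted, contradicting minimality), hence every layer meets $M$ in at least two points and $|M|\geqslant 2n$; a disconnected point--layer incidence graph would give two independent annihilation functions, contradicting Lemma~\ref{lemma: M is min. non-basic <=> annihilation weight function is unique}; the conjectured identity is indeed equivalent to the layer-balance identity $\sum_L\bigl(|L\cap M|-1-w_L\bigr)=0$; and your base case $|M|=2n$ is correct (all layers then have exactly two points, opposite values propagate through the connected incidence graph, and coprimality forces $|f|\equiv 1$).

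The missing piece is exactly where you located it: the reduction step, and there is a concrete reason, visible in the paper itself, why no naive excision or contraction can work. The sets constructed in the proof of Theorem~\ref{th_irreducible_annihilation_function_is_unbounded} are minimal non-basic with $\sum_{\x}|f(\x)|=8m$, while, embedded in the smallest cube containing them (side $n=3m+2$), they satisfy $2\bigl(|M|-n\bigr)=6m$. The identity thus genuinely fails for minimal non-basic sets once some layers are allowed to be empty; it is not a property of minimality alone but of minimality combined with the all-layers-nonempty hypothesis. Consequently, any inductive reduction that empties even a single layer (or changes $n$ without tight control) leaves the class of sets to which the conjecture applies, and the right-hand side jumps discontinuously. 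Designing a reduction that simultaneously preserves minimality, irreducibility of $f$, and nonemptiness of every layer is precisely the open content of the conjecture; your proposal sets up a correct framework and verifies the extremal case $|M|=2n$, but proves nothing beyond it.
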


 As we mentioned before, due to Theorem~\ref{th_irreducible_annihilation_function_is_unbounded}, there is no reason to expect the existence of simple criterion (similar to Theorem~\ref{th_set_to_graph}) for a subset of $[n]^d$ to be basic in the general case $d\geqslant3$. Still, it does not mean that there is no simplification at all, and it would be interesting to find one, at least for $d=3$ or for the case of small values of~$n$. 

 Another possible direction for research would come from the generalization of the initial problem to hypergraphs. As we have seen in Section~\ref{Section:graphs_app}, the concept of basic hypergraphs admits almost the same interpretation in algebraic terms as the one of basic subsets. We can make this similarity even deeper as follows. Let $G = (V, E)$ be a~hypergraph. Define a linear map $\Psi \colon \mathbb{R}^{|V|} \to \mathbb{R}^{|E|}$ on indicators,
  $$
    \Psi(\textbf{1}_{v}) = \sum\limits_{e\colon v \in e} \textbf{1}_e = \delta_v,
  $$
 and extend it on $\mathbb{R}^{|V|}$ by linearity. 
 In other words, for any $v \in V$, we set the image of the indicator function~$\textbf{1}_{v}$ to be the co-boundary $\delta_v$. Then the analogue of Lemma~\ref{lemma: M is non-basic <=> annihilation weight function} is that the hypergraph is basic if and only if $\ker\Psi$ is trivial, while the analogue of Lemma~\ref{lemma: M is min. non-basic <=> annihilation weight function is unique} is that for minimal non-basic hypergraphs we have $\dim\ker\Psi=1$. At the same time, the analogue of Theorem~\ref{th: M is basic => |M| < dn - (d-2)} is that for a basic hypergraph $G = (V, E)$, on has $|V| \leqslant |E|$ (which is trivial).

 It is natural to state the following general question.

\begin{question}\label{question: basic hypergraph}
  What are the conditions for a hypergraph to be basic?
\end{question}

For now, this question in its generality is open.

\section{Acknowledgements} \label{Section:acknowledgements}

 We thank A.B.~Skopenkov for useful discussions and criticism, N.~Volkov for searching examples of non-basic subsets and I.~Boyarov for proving the weaker version of Theorem~\ref{thBoyarov}.

 This work was partly supported by Russian Science Foundation Grant N~22-11-00177.


\begin{thebibliography}{20}


\bibitem{Arnold1958} V.I. Arnold, {\it Problem 6},
Mat. Prosveshchenie, ser. 3 (1957), 273--274 (in Russian).

\bibitem{Bretto2013} A. Bretto, \emph{Hypergraph theory. An introduction}, Mathematical Engineering, 2013.

\bibitem{Prasolov2006} V.V. Prasolov, \emph{Elements of homology theory.}, AMS, Graduate Studies in Math, Volume 81, 2007.

\bibitem{Skopenkov2010} A.B. Skopenkov, {\it Basic embeddings and 13th problem of Hilbert.},
Mat.Pros., 14 (2010) 143-174, http://arxiv.org/abs/1001.4011

\bibitem{Sternfeld1989} Y. Sternfeld, Hilbert's 13th problem and dimension, Lect. Notes Math. 1376 (1989),
1-49.


\end{thebibliography}
\end{document}